
\RequirePackage{ifthen}
\newboolean{SIOPT}
\setboolean{SIOPT}{false}

\ifthenelse {\boolean{SIOPT}}
{
\documentclass[review,hidelinks,onefignum,onetabnum]{siamart220329}
} {
\documentclass[11pt,letterpaper]{article}
\usepackage[margin=1in]{geometry}
}

\usepackage{graphicx}
\usepackage{amsmath}
\usepackage{amssymb}
\usepackage{hyperref}
\usepackage{color}

\usepackage{algorithm}
\usepackage{algpseudocode}

\usepackage{accents}

\usepackage{enumitem} 
\setlist[itemize]{noitemsep,nolistsep} 
\setlist[enumerate]{noitemsep,nolistsep} 

\usepackage{cite} 

\usepackage{mathrsfs} 

\usepackage{xspace} 

\usepackage{soul} 

\usepackage[makeroom]{cancel} 

\usepackage{thmtools}
\usepackage{thm-restate}

\newcommand{\R}{\mathbb R}
\newcommand{\Z}{\mathbb Z}
\newcommand{\Q}{\mathbb Q}

\let\st\relax
\DeclareMathOperator{\st}{s.t.}
\DeclareMathOperator{\conv}{conv}
\DeclareMathOperator{\proj}{proj}
\DeclareMathOperator{\spn}{span}
\DeclareMathOperator{\size}{size}
\DeclareMathOperator{\width}{width}

\DeclareMathOperator{\dist}{dist}
\DeclareMathOperator{\vol}{vol}
\DeclareMathOperator{\rec}{rec.cone}
\DeclareMathOperator{\rank}{rank}

\newcommand{\A}{\mathcal A}
\newcommand{\F}{\mathcal F}
\newcommand{\G}{\mathcal G}
\renewcommand{\P}{\mathcal P}
\newcommand{\B}{\mathcal B}
\newcommand{\C}{\mathcal C}

\renewcommand{\S}{\mathcal S}

\renewcommand{\H}{\mathcal H}

\newcommand{\sQ}{\mathcal Q}

\newcommand{\basmatB}{B}

\newcommand{\ceil}[1]{\lceil#1\rceil}

\newcommand{\abs}[1]{\lvert#1\rvert}
\newcommand{\absL}[1]{\left\lvert#1\right\rvert}

\newcommand{\norm}[1]{\lVert#1\rVert_2}

\newcommand{\infnorm}[1]{\lVert#1\rVert_\infty}

\newcommand{\pare}[1]{\left(#1\right)}
\newcommand{\bra}[1]{\left\{#1\right\}}

\newcommand{\transp}{\mathsf T}


\newcommand{\constlen}{2^{p(p-1)/4}}




\ifthenelse {\boolean{SIOPT}}
{


\newenvironment{prfc}[1][]
{\begin{proof}[Proof #1]}
{\end{proof}}




\newenvironment{cpf}
{\begin{trivlist} \item[] {\em Proof of claim.}}
{$\hfill\diamond$ \end{trivlist}}

} {

\usepackage{amsthm}
\newtheorem{theorem}{Theorem}
\newtheorem{proposition}{Proposition}
\newtheorem{lemma}{Lemma}



\newenvironment{prfc}[1][]
{\begin{proof}[Proof #1]}
{\end{proof}}



\newenvironment{cpf}
{\begin{trivlist} \item[] {\em Proof of claim. }}
{$\hfill\diamond$ \end{trivlist}}

\usepackage[capitalize,noabbrev]{cleveref}
\crefname{observation}{Observation}{Observations}
\crefname{claim}{Claim}{Claims}

}

\crefname{problem}{Problem}{Problems}


\newtheorem{claim}{Claim}

%



\title{Convex quadratic sets and the complexity of mixed integer convex quadratic programming\thanks{Supported by AFOSR grant FA9550-23-1-0433. Any opinions, findings, and conclusions or
recommendations expressed in this material are those of the authors and do not necessarily reflect the views of the Air Force Office of Scientific Research.}}

%
\author{Alberto Del Pia
\thanks{Department of Industrial and Systems Engineering \& Wisconsin Institute for Discovery,
             University of Wisconsin-Madison, Madison, WI, USA.
             E-mail: {\tt delpia@wisc.edu}.}}

\date{February 2, 2024}

\begin{document}


\maketitle

\begin{abstract}
In pure integer linear programming it is often desirable to work with polyhedra that are full-dimensional, and it is well known that it is possible to reduce any polyhedron to a full-dimensional one in polynomial time.
More precisely, using the Hermite normal form, it is possible to map a non full-dimensional polyhedron to a full-dimensional isomorphic one in a lower-dimensional space, while preserving integer vectors.
In this paper, we extend the above result simultaneously in two directions.
First, we consider mixed integer vectors instead of integer vectors, by leveraging on the concept of ``integer reflexive generalized inverse.''
Second, we replace polyhedra with convex quadratic sets, which are sets obtained from polyhedra by enforcing one additional convex quadratic inequality.
We study structural properties of convex quadratic sets, and utilize them to obtain polynomial time algorithms to recognize full-dimensional convex quadratic sets, and to find an affine function that maps a non full-dimensional convex quadratic set to a full-dimensional isomorphic one in a lower-dimensional space, while preserving mixed integer vectors.
We showcase the applicability and the potential impact of these results by showing that they can be used to prove that mixed integer convex quadratic programming is fixed parameter tractable with parameter the number of integer variables.
Our algorithm unifies and extends the known polynomial time solvability of pure integer convex quadratic programming in fixed dimension and of convex quadratic programming.
\end{abstract}




\newcommand{\kywrds}{convex quadratic sets, mixed integer quadratic programming, FPT algorithm}

\ifthenelse {\boolean{SIOPT}}
{
\begin{keywords}
\kywrds
\end{keywords}

\begin{AMS}
90C11, 90C20, 90C26, 90C60
\end{AMS}
}{
\emph{Key words:} \kywrds
}

\section{Introduction}
\label{sec intro}

Polyhedra constitute one of the most fundamental geometric objects in mathematical optimization, especially in linear programming, integer programming, and polyhedral combinatorics.
A desirable property of a polyhedron is being full-dimensional or, in other words, having positive volume.
It is well known that this property can be checked in polynomial time. 
Furthermore, it is possible to find, in polynomial time, an affine function that maps a non full-dimensional polyhedron to a full-dimensional isomorphic one in a lower-dimensional space.
A map of this type can be found rather easily, for example by solving linear programming problems \cite{SchBookIP,ConCorZamBook}.
This technique has been used in many algorithms in the literature to assume, without loss of generality, that a given polyhedron is full-dimensional.

A similar reduction is known also for pure integer linear sets.
In fact, it is possible to find, in polynomial time, an affine function as the one discussed above, but with the additional property of preserving integer vectors.
This result is heavily used in pure integer programming and, for example, plays a key role in most Lenstra-type algorithms (see, e.g., \cite{Len83}).
A map of this type can be obtained using a characterization of integer solutions to systems of linear equations, through the use of the Hermite normal form (see, e.g., \cite{SchBookIP,ConCorZamBook}).

To the best of our knowledge, no similar result is known in the mixed integer setting.
In \cref{sec linear} we close this gap in the literature and extend the above results to mixed integer linear sets, by leveraging on the concept of ``integer reflexive generalized inverse.''
In particular, in \cref{prop gen solution} we give a polynomial time algorithm which constructs an affine function that maps the mixed integer solutions to a system of linear equations to all the mixed integer points in a lower-dimensional space.
This result is then used to prove \cref{th full dim poly}, which allows us to find in polynomial time an affine function that maps a non full-dimensional polyhedron to a full-dimensional isomorphic one in a lower-dimensional space, while preserving mixed integer vectors.
The latter result is very versatile, and can be used in the design of polynomial time algorithms for mixed integer programming problems with linear inequality constraints to assume, without loss of generality, that the given polyhedron is full-dimensional.

A natural question is whether results similar to \cref{th full dim poly} can be obtained for convex sets that are more general than polyhedra.
In particular, results of this type would yield reductions similar to the previous ones, but in the wider realm of mixed integer programming problems with convex inequality constraints. 
In turn, as we will showcase later, this would allow us to make use of a number of techniques that have been so far applied only to systems of linear constraints.
In \cref{sec cqs} we turn our attention to convex quadratic sets, which are sets obtained from polyhedra by enforcing one additional convex quadratic inequality.
On the one hand, convex quadratic sets form one of the simplest and most natural extensions of polyhedra.
On the other hand, these sets play a fundamental role in (mixed integer) 
quadratic programming (see, e.g., \cite{Vav90,dPDeyMol17MPA}),
and are linked with several other branches of optimization, including (mixed integer) second-order cone programming (see, e.g., \cite{AliGol03,HoKil17}), and cutting plane methods (see, e.g., \cite{MunSer22}).
%
%
In \cref{lem low dim 1,lem low dim 2,lem full-dim} we categorize convex quadratic sets into three types and we obtain structural and algorithmic results for each type.
These three lemmas allow us to give,
in \cref{prop full-dim}, a characterization of full-dimensional convex quadratic sets that yields a polynomial time algorithm to check full-dimensionality.
These results are then used to 
prove our extension of \cref{th full dim poly} to convex quadratic sets.
Namely, in \cref{th full-dim conv quad}, we show how we can find in polynomial time an affine function that maps a non full-dimensional convex quadratic set to a full-dimensional isomorphic one in a lower-dimensional space, while preserving mixed integer vectors.

The fundamental results obtained in \cref{sec linear,sec cqs} are not only mathematically interesting in their own right, but also enrich 
the mathematical toolkit available to researchers for the design and analysis of algorithms for mixed integer linear and quadratic programming.
In \cref{sec proof of main}, we showcase the applicability and the potential impact of these results.
In particular, we explain how these results can be used to revive Lenstra's original approach for ellipsoid rounding presented in \cite{Len83}, in the context of mixed integer convex quadratic programming (MICQP), which is the problem of minimizing a convex quadratic function over the mixed integer points in a polyhedron.
In turn, this allows us to design, in \cref{th main opt}, an algorithm for MICQP that is fixed parameter tractable (FPT) with parameter the number of integer variables.
This approach has the key advantage that it does not use the ellipsoid method as a subroutine.
\cref{th main opt} implies that,
when the number of integer variables is fixed (but the number of continuous variables is not), our algorithm is polynomial time.
Our result unifies and extends the known polynomial time solvability of pure integer convex quadratic programming in fixed dimension \cite{Kha83} and of convex quadratic programming \cite{KozTarKha81}.
Furthermore, it is tight, in the sense that MICQP is NP-hard if the number of integer variables are not fixed \cite{SchBookIP}.
Our algorithm can also be used as a powerful subroutine for the design of algorithms with theoretical guarantees for more general mixed integer nonlinear programming problems. 
In particular, it is a necessary tool to generalize recent approximation algorithms for mixed integer nonconvex quadratic programming \cite{dP16IPCO,dP18MPB,dP19SIOPT,dP23bMPA} beyond the fixed rank setting.

Our proof of \cref{th main opt} 
consists of three main building blocks: \cref{prop sandwich}, \cref{prop main feasibility}, and \cref{prop unbounded}.
\cref{prop sandwich} is a result of independent interest: a L\"owner-John-type algorithm for projected convex quadratic sets.
Given a convex quadratic set $\sQ$ in $\R^n$ that is bounded and full-dimensional, \cref{prop sandwich} gives an algorithm to construct two concentric ellipsoids in $\R^p$ that sandwich the projection of $\sQ$ onto $\R^p$, and whose ratio is bounded by $4\ceil{\sqrt{p}}^3$.
In \cref{prop main feasibility}, we show that there is an FPT algorithm with parameter the number of integer variables that solves the feasibility problem associated with \cref{prob MICQP}, that is, the problem of determining whether a convex quadratic set contains mixed integer points or not.
In \cref{prop unbounded} we characterize when \cref{prob MICQP} is bounded and obtain an FPT algorithm to check it.
\section{Systems of linear equations and polyhedra}
\label{sec linear}



The main objective of this section is to state and prove \cref{th full dim poly},
which allows us to find in polynomial time an affine function that maps a non full-dimensional polyhedron to a full-dimensional isomorphic one in a lower-dimensional space, while preserving mixed integer vectors.
Our first goal is to obtain, in polynomial time, an affine function that maps 
the set of mixed integer solutions to a system of linear equations to all the mixed integer vectors in a lower-dimensional space.
To do so, we use the concept of reflexive generalized inverse.
%
A \emph{reflexive generalized inverse} of a matrix $A \in \R^{m \times n}$ is a matrix denoted by $A^{\#} \in \R^{n \times m}$ such that $AA^{\#}A = A$ and $A^{\#}AA^{\#} = A^{\#}$.
A reflexive generalized inverse of $A$ is said to be \emph{integer} if $A^{\#}A$ is integer, and it is denoted by $A^{\#}_I$.
We show that an integer reflexive generalized inverse can be obtained in polynomial time.

\begin{lemma}
\label{lem poly inv}
Let $A \in \Q^{m \times n}$.
There is a polynomial time algorithm that finds an integer reflexive generalized inverse $A^{\#}_I \in \Q^{n \times m}$ of $A$.
The algorithm also finds a unimodular matrix $U \in \Z^{n \times n}$ such that $A^{\#}_I A = U \begin{bmatrix}I_r & 0 \\ 0 & 0\end{bmatrix} U^{-1}$, where $r$ is the rank of $A$.
\end{lemma}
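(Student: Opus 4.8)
The plan is to use the Hermite normal form (HNF) of $A^{\transp}$ as the workhorse, since the HNF is exactly the classical tool that produces a unimodular change of coordinates in which the row span of $A$ becomes coordinate-aligned. Concretely, I would first compute, in polynomial time (see, e.g., \cite{SchBookIP}), a unimodular matrix $V \in \Z^{n\times n}$ and a lower-triangular matrix $H \in \Q^{m \times n}$ in column HNF such that $A V = H$; after permuting and discarding zero columns this says that $A$ maps the last $n-r$ coordinates of the $V$-basis into $0$ and the first $r$ into a set of rationally independent vectors, where $r = \rank A$. Writing $V = \bra{V_1 \mid V_2}$ with $V_1 \in \Z^{n\times r}$ and $V_2 \in \Z^{n\times (n-r)}$, the columns of $V_2$ form an integer basis of the (rational) kernel of $A$, and $A V_1$ has full column rank $r$.

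The second step is to manufacture the integer reflexive generalized inverse. Since $A V_1 \in \Q^{m\times r}$ has full column rank, it has an ordinary left inverse; I would take $A^{\#}_I := V_1\,(A V_1)^{+}$, where $(A V_1)^{+} = \pare{(AV_1)^{\transp} A V_1}^{-1}(AV_1)^{\transp}$ is the Moore--Penrose pseudoinverse, so that $(AV_1)^{+}(A V_1) = I_r$. Then $A^{\#}_I A$ acts on the $V$-basis by killing $V_2$ and fixing $V_1$, i.e. $A^{\#}_I A\, V = \bra{V_1 \mid 0} = V \begin{bmatrix}I_r & 0 \\ 0 & 0\end{bmatrix}$, which gives $A^{\#}_I A = V \begin{bmatrix}I_r & 0 \\ 0 & 0\end{bmatrix} V^{-1}$; in particular $A^{\#}_I A$ is an integer matrix (it is a projection along a sublattice onto an integer-spanned complement), so $A^{\#}_I$ is an \emph{integer} reflexive generalized inverse. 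The two defining identities are then immediate from $(AV_1)^{+}(AV_1) = I_r$: we get $A A^{\#}_I A = A V_1 (AV_1)^{+} A$, and since the columns of $A$ all lie in the column space of $A V_1$ we have $A V_1 (AV_1)^{+} A = A$; similarly $A^{\#}_I A A^{\#}_I = V_1 (AV_1)^{+} A V_1 (AV_1)^{+} = V_1 (AV_1)^{+} = A^{\#}_I$. Taking $U := V$ finishes the statement. Everything here is polynomial time: HNF computation, a rational matrix inverse of an $r\times r$ matrix, and a few matrix products, all with polynomially bounded bit-sizes by the standard bounds on HNF entries.

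The part that needs the most care is the bookkeeping that $A^{\#}_I A$ is genuinely an integer matrix, rather than merely the identity-plus-rational-junk that a naive pseudoinverse would give. The key point is that the integrality is forced entirely by the \emph{integrality of the unimodular matrix $V$} coming from the HNF — $A^{\#}_I A$ is the linear map that is the identity on $\spn_{\Q}(V_1)$ and zero on $\spn_{\Q}(V_2)$, and since $\bra{V_1\mid V_2}$ is a $\Z$-basis of $\Z^n$ this map sends $\Z^n$ into $\Z^n$. (By contrast, the plain Moore--Penrose inverse $A^{+}$ projects orthogonally, which is why it is generally not integer.) A secondary subtlety is handling the case $r < m$ or rank-deficiency gracefully: one works with $A V_1$ of full column rank $r$ throughout, so $(AV_1)^{\transp}AV_1$ is invertible and no degeneracy arises. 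I would also double-check the direction of the HNF (column versus row) so that the unimodular operations act on the right as claimed; modulo that, the argument is routine.
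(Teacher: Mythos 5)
Your proposal is correct and follows essentially the same route as the paper: both proofs hinge on the Kannan--Bachem Hermite normal form computation to obtain a unimodular $V$ with $AV = \begin{bmatrix} AV_1 & 0\end{bmatrix}$ and $AV_1$ of full column rank, and then define the generalized inverse so that $A^{\#}_I A = V \begin{bmatrix}I_r & 0 \\ 0 & 0\end{bmatrix} V^{-1}$, with integrality forced by $V$ being unimodular. The only cosmetic difference is that the paper first permutes to extract $r$ independent rows and inverts the resulting $r \times r$ block $K_1$ explicitly, whereas you take the Moore--Penrose left inverse of $AV_1$ directly; both yield valid integer reflexive generalized inverses in polynomial time.
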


\begin{proof}
Let $r$ be the rank of $A$, which can be computed in polynomial time with Gaussian elimination.
By interchanging rows of $A$, we can move $r \le m$ linearly independent rows in the first $r$ rows.
If we denote by $W$ the unimodular matrix of the row interchanges, we have
\begin{align}
\label{eq TU1}
WA =
\begin{bmatrix}
A_1 \\
A_2
\end{bmatrix} \in \Q^{m \times n},
\end{align}
where $A_1 \in \Q^{r \times n}$ has full row rank and $A_2 \in \Q^{m-r \times n}$.

It follows from \cite{KanBac79} that we can compute in polynomial time a unimodular matrix $U \in \Z^{n \times n}$ such that $A_1 U=\begin{bmatrix}K_1 & 0_{r \times n-r}\end{bmatrix}$, where $K_1 \in \Q^{r \times r}$ is invertible.
Since each row of $A_2$ is a linear combination of rows of $A_1$, we have
\begin{align}
\label{eq TU2}
\begin{bmatrix}
A_1 \\
A_2
\end{bmatrix} 
U
=\begin{bmatrix}K_1 & 0_{r \times n-r} \\ K_2 & 0_{m-r \times n-r}\end{bmatrix},
\end{align}
where $K_2 \in \Q^{m-r \times r}$.
Let 
\begin{align*}
K := \begin{bmatrix}K_1 & 0_{r \times n-r} \\ K_2 & 0_{m-r \times n-r}\end{bmatrix} \in \Q^{m \times n}.
\end{align*}

Next, we show that an integer reflexive generalized inverse of $K$ is 
\begin{align*}
K^{\#}_I := \begin{bmatrix}K_1^{-1} & 0_{r \times m-r} \\ 0_{n-r \times r} & 0_{n-r \times m-r}\end{bmatrix}.
\end{align*}
We have
\begin{align*}
& K_{I}^{\#} K 
= \begin{bmatrix}I_r & 0 \\ 0 & 0\end{bmatrix}
 \qquad \text{integer}, \\
& K K_{I}^{\#} K
= \begin{bmatrix}K_1 & 0 \\ K_2 & 0\end{bmatrix} \begin{bmatrix}I_r & 0 \\ 0 & 0\end{bmatrix}
= K, \\
& K_{I}^{\#} K K_{I}^{\#}
= \begin{bmatrix}I_r & 0 \\ 0 & 0\end{bmatrix} \begin{bmatrix}K_1 & 0_{r \times n-r} \\ K_2 & 0_{m-r \times n-r}\end{bmatrix}
=  K_{I}^{\#}.
\end{align*}

We now prove that an integer reflexive generalized inverse of $A$ is 
\begin{align*}
A^{\#}_I := U K^{\#}_I W.
\end{align*}
From \eqref{eq TU1} and \eqref{eq TU2}, we obtain
$A = W^{-1} 
\begin{bmatrix}
A_1 \\
A_2
\end{bmatrix} 
= W^{-1} K U^{-1}$,
and derive
\begin{align*}
& A A_{I}^{\#} A
= W^{-1} K U^{-1} U K^{\#}_I W W^{-1} K U^{-1} 
= W^{-1} K K^{\#}_I K U^{-1} 
= W^{-1} K U^{-1} 
= A, \\
& A_{I}^{\#} A A_{I}^{\#}
= U K^{\#}_I W W^{-1} K U^{-1} U K^{\#}_I W 
= U K^{\#}_I K K^{\#}_I W 
= U K^{\#}_I W 
=  A_{I}^{\#}.
\end{align*}
To conclude the proof, we write 
\begin{align*}
A_{I}^{\#} A 
= U K^{\#}_I W W^{-1} K U^{-1} 
= U K^{\#}_I K U^{-1}
= U \begin{bmatrix}I_r & 0 \\ 0 & 0\end{bmatrix} U^{-1}.
\end{align*}
Since $U$ and $U^{-1}$ are integer, the matrix $A_{I}^{\#} A$ is integer as well.
\end{proof}

\subsection{Characterization of mixed integer solutions to systems of linear equations}

We are now ready to present our polynomial time algorithm for the characterization of mixed integer solutions to systems of linear equations.
Corresponding results in the pure integer setting can be found, for example, in textbooks \cite{SchBookIP,ConCorZamBook}.


\begin{proposition}
\label{prop gen solution}
Let $W \in \Q^{m \times n}$, $w \in \Q^m$, $p \in \bra{0,1,\dots,n}$, and consider the sets
\begin{align*}
\A := \bra{x \in \R^n : Wx = w }, \qquad \S := \A \cap \pare{\Z^p \times \R^{n-p}}.
\end{align*}
There is a polynomial time algorithm that checks whether $\S$ is empty or not.
Let $A \in \Q^{m \times p}$, $B \in \Q^{m \times n-p}$ so that $W = \begin{bmatrix} A & B \end{bmatrix}$ and let $n' := n-\rank(W)$ and $p' := p - \rank(W) + \rank(B)$.
If $\S$ is nonempty, the algorithm finds 
a map $\tau : \R^{n'} \to \R^n$
of the form $\tau(x') = \bar x + Mx'$ with $\bar x \in \Z^p \times \Q^{n-p}$ and $M \in \Q^{n \times n'}$ of full rank, such that 
\begin{align*}
\A & = \tau \pare{\R^{n'}} \\
\S & = \tau \pare{\Z^{p'} \times \R^{n'-p'}}.
\end{align*}
Furthermore, if an equality of the form $d^\transp x = \beta$ with $d_{p+1} = \cdots = d_n = 0$ is valid for $\A$, then $p' \le p-1$.
\end{proposition}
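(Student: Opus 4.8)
The plan is to reduce everything to the classical pure‑integer setting by first projecting out the continuous block. Write $x=(y,z)\in\R^{p}\times\R^{n-p}$, so the system reads $Ay+Bz=w$. First I would check in polynomial time, by Gaussian elimination, whether $\A=\emptyset$; if so then $\S=\emptyset$ and we stop. Otherwise I compute, again by rational Gaussian elimination: the rank $r_B=\rank(B)$; a full‑row‑rank matrix $C\in\Q^{(m-r_B)\times m}$ with $CB=0$, so that $\operatorname{col}(B)=\ker(C)$; a matrix $Z_0\in\Q^{(n-p)\times(n-p-r_B)}$ whose columns form a basis of $\ker(B)$; and a rational reflexive generalized inverse $B^{\#}$ of $B$ (so $BB^{\#}B=B$, $B^{\#}BB^{\#}=B^{\#}$, hence $BB^{\#}v=v$ for every $v\in\operatorname{col}(B)$). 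The crucial observation is that, for given $y\in\R^{p}$, the fiber $\{z:Bz=w-Ay\}$ is nonempty iff $w-Ay\in\operatorname{col}(B)$, i.e. $(CA)y=Cw$, and in that case it equals $B^{\#}(w-Ay)+\operatorname{col}(Z_0)$. Thus the projection of $\A$ onto the $y$‑coordinates is $\A_{y}:=\{y\in\R^{p}:(CA)y=Cw\}$, and $\S\ne\emptyset$ iff the pure integer system $(CA)y=Cw$ has an integer solution — checkable in polynomial time by the Hermite normal form after clearing denominators. This settles the emptiness question.

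Now suppose $\S\ne\emptyset$. Applying the standard Hermite‑normal‑form parametrization to $(CA)y=Cw$ produces $\bar y\in\Z^{p}$ and $N\in\Z^{p\times k}$ of full column rank $k$ such that $\{y\in\Z^{p}:(CA)y=Cw\}=\{\bar y+Ny'':y''\in\Z^{k}\}$, with the columns of $N$ a $\Z$‑basis of $\{y\in\Z^{p}:(CA)y=0\}$; since $CA$ is rational this lattice is full rank in $\ker_{\R}(CA)$, so also $\operatorname{col}_{\R}(N)=\ker_{\R}(CA)$ and $\A_{y}=\{\bar y+Ny'':y''\in\R^{k}\}$. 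Because $C$ has full row rank with $\ker(C)=\operatorname{col}(B)$ and $CB=0$, the map $v\mapsto Cv$ induces an isomorphism $\operatorname{col}(W)/\operatorname{col}(B)\cong\operatorname{col}(CA)$, whence $\rank(CA)=\rank(W)-\rank(B)$ and therefore $k=p-\rank(CA)=p'$ and $n-p-r_B=n'-p'$. I would then set, with $x'=(y'',z'')\in\R^{p'}\times\R^{n'-p'}=\R^{n'}$,
\[
\bar x=\begin{bmatrix}\bar y\\ B^{\#}(w-A\bar y)\end{bmatrix},\qquad
M=\begin{bmatrix}N & 0\\ -B^{\#}AN & Z_0\end{bmatrix},\qquad \tau(x')=\bar x+Mx'.
\]
Then $\bar x\in\Z^{p}\times\Q^{n-p}$ and $M$ has full column rank $n'$ (the top block forces the first $p'$ columns independent, then $Z_0$ the remaining ones). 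Substituting the description of $\A_{y}$ and of the fibers into $\tau$ gives both $\A=\tau(\R^{n'})$ (alternatively, both sides are affine subspaces of dimension $n'$, one inside the other) and $\S=\tau(\Z^{p'}\times\R^{n'-p'})$; for the latter, $\bar y+Ny''\in\Z^{p}$ holds precisely when $y''\in\Z^{p'}$, by the Hermite parametrization and injectivity of $N$.

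For the last assertion, let $d^{\transp}x=\beta$ with $d_{p+1}=\cdots=d_{n}=0$ be valid for $\A$, and write $d=(d_{y},0)$; we may assume $d\ne 0$ (otherwise the ``equality'' is trivial), so $d_{y}\ne0$. Validity over $\A$ means $d_{y}^{\transp}y=\beta$ for every $y$ in the projection $\A_{y}=\bar y+\operatorname{col}_{\R}(N)$, which forces $d_{y}^{\transp}N=0$. Hence the nonzero vector $d_{y}$ is orthogonal to $\operatorname{col}_{\R}(N)$, so $\operatorname{col}_{\R}(N)\subsetneq\R^{p}$ and $p'=k=\dim\operatorname{col}_{\R}(N)\le p-1$.

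The delicate point is not the feasibility test but making a \emph{single} affine map $\tau$ govern simultaneously the real description of $\A$ and the mixed‑integer description of $\S$. This is why one must choose $N$ to be at once a $\Z$‑basis of $\{y\in\Z^{p}:(CA)y=0\}$ and an $\R$‑basis of $\ker_{\R}(CA)$, and pick the continuous particular solution $B^{\#}(w-Ay)$ so that it depends affinely on $y$; and it is exactly here that the identity $p'=p-\rank(W)+\rank(B)$ enters, guaranteeing that the split $\R^{n'}=\R^{p'}\times\R^{n'-p'}$ of the domain has the correct dimensions.
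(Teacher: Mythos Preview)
Your proof is correct and follows the same high-level structure as the paper's---split $x=(y,z)$, eliminate the continuous block $z$ via a generalized inverse of $B$, solve the resulting pure-integer system in $y$, and lift back---but the execution is more elementary. The paper introduces the notion of an \emph{integer reflexive generalized inverse} (its Lemma~1) and then invokes Bowman--Burdet's theorem as a black box for the general mixed-integer solution of $Ay+Bz=w$; it afterwards simplifies the resulting formula using the unimodular factorizations of $B^{\#}B$ and $C_I^{\#}C$. You instead choose a full-row-rank $C$ with $\ker(C)=\operatorname{col}(B)$ directly (rather than working with $I-BB^{\#}$), reduce to the pure-integer system $(CA)y=Cw$, and apply the classical Hermite-normal-form parametrization to obtain $\bar y$ and $N$. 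Your rank identity $\rank(CA)=\rank(W)-\rank(B)$ is the student-level analogue of the paper's appeal to Meyer's block-rank formula. The payoff of your route is that it needs only a rational reflexive generalized inverse of $B$ and the standard HNF machinery, avoiding both Lemma~1 and the Bowman--Burdet citation; the payoff of the paper's route is that the ``integer reflexive generalized inverse'' abstraction makes the final formula for $\tau$ drop out mechanically from a single quoted theorem. For the last assertion, both arguments tacitly require $d\neq 0$; your orthogonality argument $d_y^{\transp}N=0$ is a cleaner substitute for the paper's construction of an auxiliary system $W^2$ containing $d^{\transp}$ as a row.
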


\begin{proof}
Let $q := n-p$.
Let $y \in \R^p$, $z \in \R^q$ so that $x = \begin{bmatrix} y \\ z \end{bmatrix}$.
We can then write the set $\S$ in the form
\begin{align*}
\S = \bra{\begin{bmatrix} y \\ z \end{bmatrix} \in \Z^p \times \R^q : Ay + Bz = w}.
\end{align*}
%
Let $C:=\pare{I_m-B B^{\#}} A \in \R^{m \times p}$ and $d:=\pare{I_m-B B^{\#}} w \in \R^m$.
Theorem~1 in \cite{BowBur74} states that the system 
\begin{align}
\label{eq Hermite 2}
Ay + Bz = w, \qquad 
y \in \Z^p, \qquad 
z \in \R^q.
\end{align}
has a solution if and only if
(i) $C_I^{\#} d$ is integer, and
(ii) $C C_I^{\#} d=d$.
Furthermore, if there exists a solution then the general solution is
\begin{align*}
y&=C_I^{\#} d+\pare{I_p-C_I^{\#} C} y', \\
z&=B^{\#} w-B^{\#} A C_I^{\#} d-B^{\#} A\pare{I_p-C_I^{\#} C} y'+\pare{I_q-B^{\#} B} z', \\
& \text{where $y' \in \Z^p$ and $z' \in \R^q$.}
\end{align*}
We use \cref{lem poly inv} to compute $B^{\#}$, so that we can construct $C$ and $d$.
We use again \cref{lem poly inv} to compute $C_I^{\#}$.
We can then check whether the system \eqref{eq Hermite 2} has a solution by checking (i) and (ii).
If the system has no solutions, then $\S$ is empty and we are done, so assume now that there exists a solution.
\cref{lem poly inv} also finds 
a unimodular matrix $U_B \in \Q^{q \times q}$ such that $B^{\#} B = U_B \begin{bmatrix}I_{r_B} & 0 \\ 0 & 0\end{bmatrix} U_B^{-1}$, where $r_B$ is the rank of $B$, and 
a unimodular matrix $U_C \in \Q^{p \times p}$ such that $C^{\#}_I C = U_C \begin{bmatrix}I_{r_C} & 0 \\ 0 & 0\end{bmatrix} U_C^{-1}$, where $r_C$ is the rank of $C$.
In the formula for a general solution, we replace $B^{\#} B$ and $C^{\#}_I C$ as above, and we replace $I_q$ and $I_p$ with $U_B U_B^{-1}$ and $U_C U_C^{-1}$, respectively. 
We obtain
\begin{align*}
y&=C_I^{\#} d+U_C \pare{I_p- \begin{bmatrix}I_{r_C} & 0 \\ 0 & 0\end{bmatrix} } U_C^{-1} y' \\
& = C_I^{\#} d+U_C \begin{bmatrix}0 & 0 \\ 0 & -I_{p-r_C}\end{bmatrix}  U_C^{-1} y', \\
z&=B^{\#} w-B^{\#} A C_I^{\#} d-B^{\#} A U_C \pare{ I_p- \begin{bmatrix}I_{r_C} & 0 \\ 0 & 0\end{bmatrix} } U_C^{-1} y' + U_B \pare{ I_q - \begin{bmatrix}I_{r_B} & 0 \\ 0 & 0\end{bmatrix} } U_B^{-1} z' \\ 
&=B^{\#} w-B^{\#} A C_I^{\#} d-B^{\#} A U_C \begin{bmatrix}0 & 0 \\ 0 & -I_{p-r_C}\end{bmatrix}  U_C^{-1} y' + U_B \begin{bmatrix}0 & 0 \\ 0 & -I_{q-r_B}\end{bmatrix}  U_B^{-1} z',\\
& \text{where $y' \in \Z^p$ and $z' \in \R^q$.}
\end{align*}
Since $U_B^{-1}$ is invertible, we have $\bra{U_B^{-1} z' : z'  \in \R^q} = \R^q$, and 
since $U_C^{-1}$ is unimodular, we have $\bra{U_C^{-1} y' : y'  \in \Z^p} = \Z^p$.
Thus the general solution is
\begin{align*}
y& = C_I^{\#} d+U_C \begin{bmatrix}0 & 0 \\ 0 & -I_{p-r_C}\end{bmatrix} y', \\
z&=B^{\#} w-B^{\#} A C_I^{\#} d-B^{\#} A U_C \begin{bmatrix}0 & 0 \\ 0 & -I_{p-r_C}\end{bmatrix} y' + U_B \begin{bmatrix}0 & 0 \\ 0 & -I_{q-r_B}\end{bmatrix} z',\\
& \text{where $y' \in \Z^p$ and $z' \in \R^q$.}
\end{align*}
We can now discard the first $r_C$ components of $y'$ and the first $r_B$ components of $z'$ and obtain
\begin{align*}
y& = C_I^{\#} d+U_C \begin{bmatrix}0 \\ -I_{p-r_C}\end{bmatrix} y', \\
z&= B^{\#} w-B^{\#} A C_I^{\#} d - B^{\#} A U_C \begin{bmatrix}0 \\ -I_{p-r_C}\end{bmatrix} y' + U_B \begin{bmatrix}0 \\ -I_{q-r_B}\end{bmatrix} z',\\
& \text{where $y' \in \Z^{p-r_C}$ and $z' \in \R^{q-r_B}$.}
\end{align*}
We define 
\begin{align*}
p' := p-r_C,
\qquad
q' := q-r_B,
\qquad
\bar y := C_I^{\#} d, 
\qquad
\bar z := B^{\#} w-B^{\#} A C_I^{\#} d, \\
R := U_C \begin{bmatrix}0 \\ -I_{p-r_C}\end{bmatrix}, 
\qquad
S := - B^{\#} A U_C \begin{bmatrix}0 \\ -I_{p-r_C}\end{bmatrix},
\qquad
T := U_B \begin{bmatrix}0 \\ -I_{q-r_B}\end{bmatrix}.
\end{align*}
and obtain
\begin{align*}
y& = \bar y + R z', \\
z& = \bar z + S z' + T z',\\
& \text{where $y' \in \Z^{p'}$ and $z' \in \R^{q'}$.}
\end{align*}
We can then write $\S$ in the form
\begin{align*}
\S & = \bra{\bar x + M x' : x' \in \Z^{p'} \times \R^{n'-p'}} = \tau \pare{\Z^{p'} \times \R^{n'-p'}},
\end{align*}
where
\begin{align*}
n' := p' + q',
\qquad
\bar x := \begin{bmatrix} \bar y \\ \bar z \end{bmatrix},
\qquad
M := \begin{bmatrix} R & 0 \\ S & T \end{bmatrix}.
\end{align*}
Note that $R,T$ have full column rank, thus so does $M$.
To see that 
\begin{align*}
\A & = \tau \pare{\R^{n'}},
\end{align*}
it suffices to observe that $\spn(\S) = \A$ and $\spn\pare{\Z^{p'} \times \R^{n'-p'}} = \R^{n'}$.

Next, we characterize $p',n'$.
Denote by $r_W$ the rank of $W$.
Since $W = \begin{bmatrix} A & B \end{bmatrix}$, it is well known (see, e.g., theorem 4.1 in \cite{Mey73}) that 
\begin{align*}
r_W = r_B + \rank\pare{\pare{I_m-B B^{\#}} A} = r_B + r_C.
\end{align*}
Hence,
$p'=p-r_C = p-r_W+r_B$ and 
$n'=p'+q'=p-r_W+r_B + q-r_B = n-r_W$.

We now prove the last sentence of the statement.
Assume that an equality of the form $d^\transp x = \beta$ with $d_{p+1} = \cdots = d_n = 0$ is valid for $\A$.
Let $W^1x = w^1$ be obtained from $Wx = w$ by iteratively discarding one linearly dependent equality at the time.
Clearly, $W^1x = w^1$ is a system of linearly independent equalities and $\bra{x \in \R^n : W^1x = w^1} = \A$.
Next, we construct a system of linearly independent equalities, that we denote by $W^2 x = w^2$, such that $\bra{x \in \R^n : W^2 x = w^2} = \A$ and such that the equality $d^\transp x = \beta$ is in the system $W^2 x = w^2$.
If $d^\transp x = \beta$ is in the system $W^1x = w^1$, then we define $W^2 := W^1$, $w^2 := w^1$ and we are done.
Otherwise, $d^\transp$ is a linear combination of the rows of $W^1$.
Let ${d'}^\transp x = \beta'$ be an equality in $W^1x = w^1$ such that the row ${d'}^\transp$  of $W^1$ has a nonzero coefficient in such linear combination.
Let $W^2 x = w^2$ be obtained by replacing, in the system $W^1 x = w^1$, the equality ${d'}^\transp x = \beta'$ with $d^\transp x = \beta$.
We then have $\bra{x \in \R^n : W^2 x = w^2} = \A$ and $d^\transp x = \beta$ is in the system $W^2 x = w^2$.
Let $A^2$, $B^2$ so that $W^2 = \begin{bmatrix} A^2 & B^2 \end{bmatrix}$.
Since $W^2$ has full row rank and $B^2$ has a row of zeros, we have $\rank(B^2) \le \rank(W^2)-1$.
It is simple to check that $\rank(W^2) = \rank(W)$ and $\rank(B^2) = \rank(B)$.
We then obtain $\rank(B) \le \rank(W)-1$, thus $p' = p - \rank(W) + \rank(B) \le p-1$.
\end{proof}

\subsection{Reduction to full-dimensional polyhedra}

We are now ready to present the main result of this section: In \cref{th full dim poly} below, we employ \cref{prop gen solution} to give an algorithm that finds in polynomial time an affine function that maps a non full-dimensional polyhedron to a full-dimensional isomorphic one in a lower-dimensional space, while preserving mixed integer vectors.
Note that it is not hard to design recursive algorithms to achieve the same goal, where one dimension is eliminated at each iteration (see, e.g., \cite{dP23bMPA}). 
While this is a sound approach in fixed dimension, it may not result in a polynomial time algorithm in general dimension, since applying this reduction recursively we might get numbers whose size is exponential in the size of the numbers in the original inequalities.
In this paper, the \emph{size} (also known as \emph{bit size}, or \emph{length}) of rational numbers, vectors, matrices, constraints, and optimization problems, denoted by $\size(\cdot)$, is the standard one in mathematical programming (see, e.g., \cite{SchBookIP,ConCorZamBook}), and is essentially the number of bits required to encode such objects.

\begin{theorem}
\label{th full dim poly}
Let $W \in \Q^{m \times n}$, $w \in \Q^m$, $p \in \bra{0,1,\dots,n}$, and consider the sets
\begin{align*}
\P := \bra{x \in \R^n : Wx \le w}, \qquad \S := \P \cap \pare{\Z^p \times \R^{n-p}}.
\end{align*}
There is a polynomial time algorithm that either returns that $\S$ is empty, 
or finds 
$p' \in \bra{0,1,\dots,p}$, $n' \in \bra{p',p'+1,\dots,p'+n-p}$, 
a map $\tau : \R^{n'} \to \R^n$
of the form $\tau(x') = \bar x + Mx'$, with 
$\bar x \in \Z^p \times \Q^{n-p}$ and $M \in \Q^{n \times n'}$ of full rank, such that
the polyhedron 
\begin{align*}
\P' := \bra{x' \in \R^{n'} : WMx' \le w - W \bar x}
\end{align*}
is full-dimensional, and 
\begin{align*}
\P & = \tau \pare{\P'} \\
\S & = \tau \pare{\P' \cap \pare{\Z^{p'} \times \R^{n'-p'}}}.
\end{align*}
Furthermore, if an equality of the form $d^\transp x = \beta$ with $d_{p+1} = \cdots = d_n = 0$ is valid for $\P$, then $p' \le p-1$.
\end{theorem}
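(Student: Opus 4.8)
The plan is to reduce the inequality system to an equality system by identifying the implicit equalities of $\P$, and then to apply \cref{prop gen solution} to that equality system. First I would detect in polynomial time whether $\S = \emptyset$: by \cref{prop gen solution} applied to the equality version (or directly by linear-programming feasibility plus the mixed-integer feasibility that \cref{prop gen solution} provides once we know the affine hull), emptiness of $\S$ can be certified; I will use the fact that $\P$ itself can be tested for emptiness by LP, and if $\P \neq \emptyset$ we then have to decide emptiness of $\S$, which I postpone until after the affine hull is computed. Concretely, for each row $W_i x \le w_i$ solve the LP $\max\{W_i x : Wx \le w\}$; the inequality is an implicit equality exactly when the optimum equals $w_i$. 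Collecting all such rows yields a subsystem $W^= x = w^=$ with $\{x : Wx \le w,\ W^= x = w^=\} = \P$ and with $\operatorname{aff}(\P) = \{x : W^= x = w^=\}$; all of this is polynomial time and the numbers produced have size polynomial in $\size(W,w)$ since they are LP optima.

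Next I would invoke \cref{prop gen solution} on the system $W^= x = w^=$ with the same $p$. This either reports $\A := \{x : W^= x = w^=\}$ contains no mixed integer point — in which case $\S = \P \cap (\Z^p \times \R^{n-p}) \subseteq \A \cap (\Z^p \times \R^{n-p}) = \emptyset$ and we return ``$\S$ empty'' — or it returns $p' \in \{0,\dots,p\}$, $n' = n - \rank(W^=)$, $\bar x \in \Z^p \times \Q^{n-p}$, and $M \in \Q^{n \times n'}$ of full rank with $\tau(x') = \bar x + Mx'$ satisfying $\A = \tau(\R^{n'})$ and $\A \cap (\Z^p \times \R^{n-p}) = \tau(\Z^{p'} \times \R^{n'-p'})$. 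Since $M$ has full column rank, $\tau$ is injective, so $\tau$ restricted to $\P' := \{x' \in \R^{n'} : WMx' \le w - W\bar x\} = \tau^{-1}(\P)$ is a bijection onto $\P$, giving $\P = \tau(\P')$; intersecting with the mixed-integer lattice and using injectivity of $\tau$ together with the characterization $\A \cap (\Z^p \times \R^{n-p}) = \tau(\Z^{p'} \times \R^{n'-p'})$ yields $\S = \tau(\P' \cap (\Z^{p'} \times \R^{n'-p'}))$. Full-dimensionality of $\P'$ follows because $\tau$ maps $\R^{n'}$ isomorphically onto $\operatorname{aff}(\P)$, hence $\tau^{-1}$ maps the relative interior of $\P$ (nonempty since $\P \neq \emptyset$) onto the interior of $\P'$ in $\R^{n'}$, so $\P'$ has nonempty interior. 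The size bounds on $\bar x, M$, and hence on $\P'$, come directly from the polynomial-time guarantee of \cref{prop gen solution} applied to a system $W^= x = w^=$ of polynomial size.

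For the final sentence, suppose $d^\transp x = \beta$ with $d_{p+1} = \dots = d_n = 0$ is valid for $\P$. Then it is valid for $\operatorname{aff}(\P) = \A = \{x : W^= x = w^=\}$, so the hypothesis of the last sentence of \cref{prop gen solution} holds for the system $W^= x = w^=$, which gives $p' \le p-1$ directly. The main obstacle I anticipate is purely bookkeeping rather than conceptual: one must be careful that the implicit-equality subsystem $W^= x = w^=$ is produced with controlled size (this is standard for LP but needs to be stated), and that the identities $\P = \tau(\P')$ and $\S = \tau(\P' \cap (\Z^{p'} \times \R^{n'-p'}))$ are derived cleanly from injectivity of $\tau$ and the two set equalities furnished by \cref{prop gen solution}, since $\tau$ is an affine isomorphism from $\R^{n'}$ onto $\A \supseteq \P$ and $WMx' \le w - W\bar x \iff W\tau(x') \le w \iff \tau(x') \in \P$.
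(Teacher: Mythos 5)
Your proposal follows essentially the same route as the paper: detect the implicit equalities of $Wx \le w$ by linear programming, apply \cref{prop gen solution} to the resulting equality system $W^= x = w^=$ (whose solution set is the affine hull of $\P$), pull the inequalities back through the injective affine map $\tau$, and invoke the last sentence of \cref{prop gen solution} for the claim $p' \le p-1$. One concrete slip: your test for an implicit equality is wrong as stated. Solving $\max\{W_i x : Wx \le w\}$ and checking whether the optimum equals $w_i$ only detects that the inequality is tight at some point of $\P$ (i.e., defines a nonempty face), not that it holds with equality on all of $\P$; the correct LP is $\min\{W_i x : Wx \le w\}$, with $W_i x \le w_i$ being an implicit equality exactly when this minimum equals $w_i$. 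With that correction (and the routine check that $\P \ne \emptyset$, which you already mention), the argument is sound and matches the paper's.
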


\begin{proof}
An inequality $d^\transp x \le \beta$ from $Wx \le w$ is called an \emph{implicit equality} (in $Wx \le w$) if $d^\transp x = \beta$ for all $x$ satisfying $Wx \le w$. 
Denote by $W^=x \le w^=$ the system of implicit equalities in $Wx \le w$, and by $W^+x \le w^+$ the system of all other inequalities in $Wx \le w$.
It is well known that we can find in polynomial time the system $W^=x \le w^=$, for example by solving a linear programming problem for each inequality.
If no inequality from $Wx \le w$ is an implicit equality in $Wx \le w$, then we define $M:=I_n$ and $\bar x =0$ and we are done, thus we now assume that the system $W^=x \le w^=$ contains at least one inequality.
We apply \cref{prop gen solution} to the set
\begin{align*}
\A := \bra{x \in \R^n : W^=x = w^= }.
\end{align*}
If \cref{prop gen solution} returns that $\A \cap \pare{\Z^p \times \R^{n-p}} = \emptyset$, then $\S$ is empty and we are done.
Otherwise, \cref{prop gen solution} returns that $\A \cap \pare{\Z^p \times \R^{n-p}} \neq \emptyset$.
Let $A \in \Q^{m \times p}$, $B \in \Q^{m \times n-p}$ so that $W^= = \begin{bmatrix} A & B \end{bmatrix}$ and let $n' := n-\rank(W^=)$ and $p' := p - \rank(W^=) + \rank(B)$.
\cref{prop gen solution} also finds 
a map $\tau : \R^{n'} \to \R^n$
of the form $\tau(x') = \bar x + Mx'$ with $\bar x \in \Z^p \times \Q^{n-p}$ and $M \in \Q^{n \times n'}$ of full rank, such that 
\begin{align*}
\A & = \tau \pare{\R^{n'}} \\
\A \cap \pare{\Z^p \times \R^{n-p}} & = \tau \pare{\Z^{p'} \times \R^{n'-p'}}.
\end{align*}
Since $\P \subseteq \A$ and $\S \subseteq \A \cap \pare{\Z^p \times \R^{n-p}}$, 
the change of variables $x = \bar x + Mx'$ in the description of $\P$ yields
\begin{align*}
\P & = \tau(\P') \\
\S & = \tau\pare{\P' \cap (\Z^{p'} \times \R^{n'-p'})}.
\end{align*}
%
Since $\P \cap \bra{x \in \R^n : x^\transp H x + h^\transp x \le \eta} \subseteq \A$, the same change of variables gives
\begin{align*}
& \P \cap \bra{x \in \R^n : x^\transp H x + h^\transp x \le \eta} = \tau\pare{\P' \cap \bra{x' \in \R^{n'} : {x'}^\transp H' x' + {h'}^\transp x' \le \eta'}}.
\end{align*}
It is well known that there is a vector $x \in \R^n$ satisfying $W^=x = w^=$, $W^+x < w^+$ (see, e.g., \cite{SchBookIP}), thus $\P'$ is full-dimensional.
%
%

If we assume that an equality of the form $d^\transp x = \beta$ with $d_{p+1} = \cdots = d_n = 0$ is valid for $\P$, then it is also valid for $\A$, and \cref{prop gen solution} implies $p' \le p-1$.
\end{proof}

\section{Convex Quadratic Sets}
\label{sec cqs}

In this section we 
study convex quadratic sets.
A \emph{convex quadratic set} is a set of the form
\begin{align*}
\sQ = \{x \in \R^n : Wx \le w, \  x^\transp H x + h^\transp x \le \eta \},
\end{align*}
where $W \in \Q^{m \times n}$, $w \in \Q^m$, $H \in PSD^n(\Q)$, $h \in \Q^n$, and $\eta \in \Q$.
The main objective is to present and prove \cref{th full-dim conv quad}, 
which allows us to find 
in polynomial time an affine function that maps a non full-dimensional convex quadratic set to a full-dimensional isomorphic one in a lower-dimensional space, while preserving mixed integer vectors.


\subsection{Characterization of full-dimensional convex quadratic sets}

The first goal of this section is to categorize convex quadratic sets into three types, and to obtain structural and algorithmic results for each type.
These types of convex quadratic sets are considered separately in \cref{lem low dim 1,lem low dim 2,lem full-dim} below.


\begin{lemma}
\label{lem low dim 1}
Let $\sQ$ be a convex quadratic set
\begin{align*}
\sQ = \{x \in \R^n : Wx \le w , \ x^\transp H x + h^\transp x \le \eta \},
\end{align*}
where $W \in \Q^{m \times n}$, $w \in \Q^m$, $H \in PSD^n(\Q)$, $h \in \Q^n$, and $\eta \in \Q$.
Assume 
\begin{align*}
\bar \eta & := \min\{x^\transp H x + h^\transp x : Wx \le w\} = \eta \\
\tilde \eta & := \min\{x^\transp H x + h^\transp x : x \in \R^n\} = \eta.
\end{align*}
Then $\sQ$ is nonempty and not full-dimensional.
Furthermore, there is a polynomial time algorithm that finds an affine subspace $\A$ of  $\R^n$ such that $\sQ = \{x \in \R^n : Wx \le w \} \cap \A$.
\end{lemma}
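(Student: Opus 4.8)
The plan is to argue that, under the stated hypotheses, the convex quadratic inequality $x^\transp H x + h^\transp x \le \eta$ is equivalent to an \emph{affine} equation, so that $\sQ$ is simply the polyhedron $\{x \in \R^n : Wx \le w\}$ intersected with a proper affine subspace of $\R^n$; nonemptiness will then come from the hypothesis on $\bar\eta$, and the failure of full-dimensionality from a dimension count.

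Concretely, I would set $f(x) := x^\transp H x + h^\transp x$, which is convex since $H \succeq 0$, with $\nabla f(x) = 2Hx + h$. Because $\tilde\eta = \eta$ is finite, $f$ is bounded below on $\R^n$, and for a convex quadratic this is equivalent to the linear system $2Hx + h = 0$ being consistent. Its solution set $\A := \{x \in \R^n : 2Hx + h = 0\}$ is then a nonempty affine subspace, and using the exact second-order expansion $f(x) = f(x_0) + (x-x_0)^\transp H (x-x_0)$ about any $x_0 \in \A$ (the linear term vanishes since $\nabla f(x_0)=0$) together with $H \succeq 0$, one sees that $\A$ is exactly the set of global minimizers of $f$, that $f \equiv \tilde\eta = \eta$ on $\A$, and that $f(x) > \eta$ for every $x \notin \A$. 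Hence $\{x \in \R^n : f(x) \le \eta\} = \A$, so $\sQ = \{x \in \R^n : Wx \le w\} \cap \A$, which is the last assertion of the lemma; the algorithm is then trivial, since it only has to output $\A$ in the form $2Hx = -h$ (and $H,h$ are rational). I would also record here that we may assume $H \neq 0$: if $H = 0$, then $\tilde\eta$ finite forces $h = 0$ and hence $\eta = 0$, and $\sQ$ degenerates to the polyhedron $\{x \in \R^n : Wx \le w\}$.

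To finish, $\sQ$ is nonempty because the hypothesis $\bar\eta = \eta$ provides --- by the Frank--Wolfe theorem (a quadratic function bounded below on a nonempty polyhedron attains its minimum), or simply because the hypothesis is stated with a $\min$ --- a point $x^*$ with $Wx^* \le w$ and $f(x^*) = \bar\eta = \eta = \tilde\eta$; such $x^*$ attains the global minimum of $f$, so $x^* \in \A$, and therefore $x^* \in \{x \in \R^n : Wx \le w\} \cap \A = \sQ$. Finally, since $H$ is a nonzero PSD matrix, $\rank(2H) = \rank(H) \ge 1$, so the consistent system $2Hx = -h$ has solution set $\A$ of dimension $n - \rank(H) \le n-1$; thus $\A$, and a fortiori $\sQ \subseteq \A$, lies in a hyperplane and has empty interior, i.e., $\sQ$ is not full-dimensional.

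I expect the only subtle point to be the bookkeeping around attainment of the two minima: $\tilde\eta$ is attained automatically at any solution of $2Hx+h=0$, whereas for $\bar\eta$ I rely on the Frank--Wolfe theorem (or on the hypothesis being phrased as a minimum). The remaining steps are routine facts about convex quadratic functions and affine subspaces.
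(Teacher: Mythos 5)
Your proof is correct and takes essentially the same route as the paper's: identify the sublevel set $\{x \in \R^n : x^\transp Hx + h^\transp x \le \eta\}$ with the set of global minimizers $\A = \{x : 2Hx + h = 0\}$ via the vanishing gradient, obtain nonemptiness from the attainment of $\bar\eta = \eta$, and obtain non-full-dimensionality from $\sQ \subseteq \A$. Your explicit aside on the degenerate case $H=0$ is slightly more careful than the paper, which simply asserts that finiteness of $\tilde\eta$ forces $H \neq 0$ (false when $h=0$ as well); note, though, that when $H=0$ and $h=0$ the ``not full-dimensional'' conclusion can genuinely fail, so the lemma is implicitly used only with $H \neq 0$.
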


\begin{proof}
The assumption $\bar \eta = \eta$ implies that $\sQ$ is nonempty.
%
%
%
Let $q(x) := x^\transp H x + h^\transp x$.
%
Note that $H$ is nonzero since $\tilde \eta = \eta$ is finite.
The set $\{x \in \R^n : q(x) = \tilde \eta\}$ is then the set of minima of the convex function $q(x)$ over $\R^n$.
Hence,
\begin{align*}
\{x \in \R^n : q(x) \le \eta\} 
& = \{x \in \R^n : q(x) = \tilde \eta\} \\
& = \{x \in \R^n : \nabla q (x) = 0\} \\
& = \{x \in \R^n : 2H x+h=0\}.
\end{align*}
The statement follows by defining $\A := \{x \in \R^n : 2H x+h=0\}.$
The set $\sQ$ is not full-dimensional, since it is contained in $\A$.
\end{proof}

\begin{lemma}
\label{lem low dim 2}
Let $\sQ$ be a convex quadratic set
\begin{align*}
\sQ = \{x \in \R^n : Wx \le w , \ x^\transp H x + h^\transp x \le \eta \},
\end{align*}
where $W \in \Q^{m \times n}$, $w \in \Q^m$, $H \in PSD^n(\Q)$, $h \in \Q^n$, and $\eta \in \Q$.
Assume $\P := \{x \in \R^n : Wx \le w \}$ is full-dimensional and 
\begin{align*}
\bar \eta & := \min\{x^\transp H x + h^\transp x : Wx \le w\} = \eta \\
\tilde \eta & := \min\{x^\transp H x + h^\transp x : x \in \R^n\} < \eta.
\end{align*}
Then $\sQ$ is nonempty and not full-dimensional.
Furthermore, there is a polynomial time algorithm that 
finds a proper face $\F$ of $\P$ (obtained from the description of $\P$ by setting some inequality constraints to equality) such that $\sQ$ is contained in $\F$.
\end{lemma}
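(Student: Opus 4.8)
The plan is to identify $\sQ$ with the set of minimizers of the convex function $q(x):=x^\transp Hx+h^\transp x$ over $\P$, to observe that this set avoids the interior of $\P$, and then to use a relative interior argument to confine it to a single proper face of $\P$.

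First I would record the elementary facts. Since $\bar\eta=\eta$, the minimum of $q$ over $\P$ is attained (it is even bounded below on all of $\R^n$, because $\tilde\eta$ is finite), so $\sQ\neq\emptyset$; and since $q(x)\ge\bar\eta=\eta$ for every $x\in\P$, we get $\sQ=\{x\in\P:q(x)\le\eta\}=\{x\in\P:q(x)=\bar\eta\}$, i.e., $\sQ$ is exactly the set of minimizers of $q$ over $\P$. Next I would show $\sQ\cap\operatorname{int}(\P)=\emptyset$: a point $x_0$ in this intersection would be a local, hence (by convexity of $q$) global, minimizer of $q$ over $\R^n$, so $q(x_0)=\tilde\eta$; but $x_0\in\sQ$ gives $q(x_0)=\bar\eta=\eta>\tilde\eta$, a contradiction. (After discarding zero rows of $W$, which are redundant since $\P$ is full-dimensional and nonempty, we have $\operatorname{int}(\P)=\{x:Wx<w\}$, so ``$x_0\notin\operatorname{int}(\P)$'' simply means that some inequality of $Wx\le w$ is tight at $x_0$.)

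The structural heart of the argument is the following. Choose $x_0\in\operatorname{relint}(\sQ)$ and let $A:=\{i:W_ix_0=w_i\}$; by the previous paragraph $A\neq\emptyset$. I claim that every $i\in A$ is tight on all of $\sQ$: given $x\in\sQ$, the relative interior property produces $x'\in\sQ$ with $x_0$ a strict convex combination of $x$ and $x'$, so $w_i=W_ix_0$ is that same strict convex combination of $W_ix\le w_i$ and $W_ix'\le w_i$, forcing $W_ix=w_i$. Hence $\sQ\subseteq\F:=\{x\in\R^n:Wx\le w,\ W_ix=w_i\text{ for }i\in A\}$, which is a face of $\P$ obtained from the description of $\P$ by turning the inequalities indexed by $A$ into equalities (a standard fact about faces of polyhedra). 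Finally $\F$ is proper: since $\P$ is full-dimensional it has no implicit equalities, so for any $i\in A$ there is $\hat x\in\P$ with $W_i\hat x<w_i$, whence $\hat x\notin\F$.

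For the algorithm I would not compute a point of $\operatorname{relint}(\sQ)$ explicitly; instead I would determine the set $A=\{i:W_ix=w_i\text{ for all }x\in\sQ\}$ one constraint at a time. Note that $i\notin A$ exactly when $\sQ$ contains a point with $W_ix<w_i$; using a standard bound asserting that a strict violation over a convex quadratic set of a given bit-size can be witnessed with slack at least some explicitly computable rational $\gamma>0$ of polynomial bit-size, this happens if and only if the convex quadratic set $\{x:Wx\le w,\ W_ix\le w_i-\gamma,\ q(x)\le\eta\}$ is nonempty, which is a polynomial-time feasibility test (reducing, e.g., to the convex quadratic program of minimizing $q$ over the perturbed polyhedron and comparing the optimum with $\eta$). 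Collecting the surviving indices yields $A$, and we output $\F$ as above. The conceptual content of the proof is short, so I expect the main obstacle to be precisely this arithmetic step: establishing the polynomial bit-size bound that turns membership in $A$ into a single convex feasibility query, so that no irrational quantity is ever computed exactly.
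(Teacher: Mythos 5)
Your structural argument is correct and takes a genuinely different route from the paper's. You identify $\sQ$ with the optimal solution set of $\min\{q(x) : x\in\P\}$, show it avoids $\operatorname{int}(\P)$, and use the prolongation property of the relative interior to conclude that every constraint tight at a point of $\operatorname{relint}(\sQ)$ is tight on all of $\sQ$; this yields the (in fact smallest) proper face of $\P$ containing $\sQ$. The paper instead computes a single rational minimizer $\bar x$ of $q$ over $\P$ with the Kozlov--Tarasov--Khachiyan algorithm and uses first-order optimality: the tangent hyperplane $\H=\{x : \nabla q(\bar x)^\transp(x-\bar x)=0\}$ contains $\sQ$ and supports $\P$ at $\bar x$ (one has $\nabla q(\bar x)^\transp(x-\bar x)\ge 0$ on $\P$, and $\nabla q(\bar x)\neq 0$ because $\tilde\eta<\eta$), so $\F:=\P\cap\H$ is a proper face whose description by tight inequalities is then recovered by linear programming. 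Your approach is more conceptual; the paper's is more directly constructive.

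The gap is exactly where you suspected it: the ``standard bound'' $\gamma$ you invoke to decide membership in $A$ is not established, and it is not an off-the-shelf fact for convex quadratic sets. Deciding whether $\sup\{w_i-W_ix : x\in\sQ\}$ is zero or positive is linear optimization over a convex quadratic set, whose optimal value can be irrational --- precisely the difficulty the paper emphasizes before \cref{prop sandwich} --- so a polyhedral-style slack bound cannot simply be cited. The bound is provable here, but the cleanest way to prove it also makes the $\gamma$-perturbation machinery unnecessary: since every $x\in\sQ$ satisfies $q(x)=\bar\eta$, convexity of $q$ along segments of minimizers forces $(x-\bar x)^\transp H(x-\bar x)=0$, hence $H(x-\bar x)=0$ and then $h^\transp(x-\bar x)=0$, for the rational minimizer $\bar x$ returned by Kozlov--Tarasov--Khachiyan. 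Therefore $\sQ=\{x\in\P : Hx=H\bar x,\ h^\transp x=h^\transp\bar x\}$ is a rational polyhedron of polynomial size, and the index set $A$ (equivalently, the implicit equalities among the rows of $Wx\le w$ on this polyhedron) can be computed by ordinary linear programs. Either this repair, or adopting the paper's tangent-hyperplane construction, closes the argument; as written, the algorithmic half of your proof rests on an unproved quantitative claim.
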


\begin{proof}
The assumption $\bar \eta = \eta$ implies that $\sQ$ is nonempty.
%
%
%
%
Let $q(x) := x^\transp H x + h^\transp x$.
First, we show how that we can find a supporting hyperplane $\H$ of $\P$ that contains $\sQ$.
If $H=0$, then we can simply set $\H := \{x \in \R^n : h^\transp x = \eta\}$, thus we now assume $H$ nonzero.
Since $\bar \eta$ is finite, the Kozlov-Tarasov-Khachiyan algorithm finds a vector $\bar x \in \P$ with $q(\bar x) = \bar \eta$.
%
%
%
The hyperplane tangent to $\{x \in \R^n : q(x) \le \eta\}$ in $\bar x$ is 
\begin{align*}
\H := \{x \in \R^n : \nabla q(\bar x)^\transp (x - \bar x) = 0 \}
= \{x \in \R^n : (2H\bar x)^\transp x = (2H\bar x)^\transp \bar x \}.
\end{align*}
Clearly, the inequality $(2H\bar x)^\transp x \le (2H\bar x)^\transp \bar x$ is valid for $\{x \in \R^n : q(x) \le \eta\}$.
On the other hand, since $\bar x \in \P$, $\tilde \eta < \eta$, and $\P$ is convex, $(2H\bar x)^\transp x \ge (2H\bar x)^\transp \bar x$ is valid for $\P$.
Therefore, $\sQ$ is contained in $\H$.

We define the face $\F$ of $\P$ in the statement as 
\begin{align*}
\F := \P \cap \H.
\end{align*}
Since $\P$ is full-dimensional, $\F$ is a proper face of $\P$.
It is well known that a description of $\F$ can be obtained from the description of $\P$ by setting some inequality constraints to equality (see theorem 3.24 in \cite{ConCorZamBook}).
These inequalities can be identified by solving linear programming problems on $\F$.
\end{proof}

\begin{lemma}
\label{lem full-dim}
Let $\sQ$ be a convex quadratic set
\begin{align*}
\sQ = \{x \in \R^n : Wx \le w , \ x^\transp H x + h^\transp x \le \eta \},
\end{align*}
where $W \in \Q^{m \times n}$, $w \in \Q^m$, $H \in PSD^n(\Q)$, $h \in \Q^n$, and $\eta \in \Q$.
Assume $\P := \{x \in \R^n : Wx \le w \}$ is full-dimensional and 
\begin{align*}
\bar \eta := \min\{x^\transp H x + h^\transp x : Wx \le w\} < \eta.
\end{align*}
Then $\sQ$ is full-dimensional.
Furthermore, there is a polynomial time algorithm that finds a full-dimensional polytope contained in $\sQ$.
\end{lemma}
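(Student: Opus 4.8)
The plan is to produce an explicit full-dimensional polytope inside $\sQ$ by exhibiting a point in the interior of $\sQ$ and then taking a small simplex around it. Since $\P$ is full-dimensional and $\bar\eta<\eta$, let $\bar x$ be a point of $\P$ attaining $q(\bar x)=\bar\eta$; such a point can be found in polynomial time by the Kozlov--Tarasov--Khachiyan algorithm (when $H=0$ any point of $\P$ works, since then $q$ is affine and $\bar\eta = h^\transp\bar x<\eta$). The key observation is that $\bar x$ satisfies the quadratic inequality strictly, $q(\bar x)=\bar\eta<\eta$, but $\bar x$ need not lie in the interior of $\P$. So the first step is to nudge $\bar x$ slightly into the interior of $\P$ while keeping the quadratic inequality strict: pick any point $x^\circ$ in the interior of $\P$ (available in polynomial time from the description of $\P$, e.g.\ by an LP), and set $x^* := (1-\lambda)\bar x + \lambda x^\circ$ for a sufficiently small rational $\lambda\in(0,1)$. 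Then $x^*$ is in the interior of $\P$ for every $\lambda\in(0,1]$, and by continuity of the convex function $q$ we have $q(x^*)<\eta$ for $\lambda$ small enough; a concrete admissible value of $\lambda$ can be computed from $\bar\eta$, $\eta$, and bounds on $\|H\|$, $\|h\|$, $\|x^\circ-\bar x\|$, so this step is polynomial.

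The second step is to turn the strictly feasible point $x^*$ into a full-dimensional polytope. Since $x^*$ lies in the interior of $\P$ and satisfies $q(x^*)<\eta$ strictly, there is a rational $\varepsilon>0$ such that the $\ell_\infty$-ball (or the regular simplex) of radius $\varepsilon$ centered at $x^*$ is contained in $\sQ$: continuity of $q$ gives a radius keeping the quadratic inequality satisfied, and the explicit slacks $w - W x^*>0$ give a radius keeping the linear inequalities satisfied; take the minimum. Again an explicit rational $\varepsilon$ is computable from the data, using that for $\|x-x^*\|\le\varepsilon$ one has $q(x)\le q(x^*)+2\varepsilon\|Hx^*\|+\varepsilon^2\|H\| + \varepsilon\|h\|$ or similar, and this bound is $\le\eta$ for small $\varepsilon$. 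The resulting cube (or simplex) $\{x : \|x-x^*\|_\infty\le\varepsilon\}$ is a full-dimensional polytope contained in $\sQ$, which in particular shows $\sQ$ is full-dimensional.

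The main obstacle, and the only place where care is needed, is making all the ``sufficiently small'' choices of $\lambda$ and $\varepsilon$ \emph{effective and polynomial}: one must give closed-form rational values whose size is polynomial in $\size(W,w,H,h,\eta)$. This requires bounding $\|\bar x\|$ (the output of the KTK algorithm has size polynomial in the input), bounding $\|x^\circ\|$ (a vertex or analytic-center-type point of the interior of $\P$ with polynomial size), and then choosing $\lambda$ and $\varepsilon$ as explicit ratios of these quantities with $\eta-\bar\eta$. None of this is deep; it is the standard ``perturb an interior point'' argument, and the separation of the three cases ($H=0$ versus $H\neq0$, and whether $\bar x$ is already interior) only affects which subroutine produces $\bar x$, not the structure of the argument.
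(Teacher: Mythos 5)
Your proposal has one genuine gap: it presupposes that the minimum $\bar\eta$ is attained. The hypothesis of the lemma is only $\bar\eta<\eta$, which includes $\bar\eta=-\infty$ (and this case really occurs in the paper's applications, e.g.\ $q(x)=x_1^2+x_2$ over a full-dimensional unbounded polyhedron). In that case there is no point $\bar x\in\P$ with $q(\bar x)=\bar\eta$, and the Kozlov--Tarasov--Khachiyan algorithm reports unboundedness rather than returning a minimizer, so your first step produces nothing to perturb. The paper spends roughly half of its proof on exactly this case: it considers the nonempty set $\{x\in\P: q(x)\le\eta-1\}$, invokes Khachiyan's norm bound to show it contains a point of explicitly bounded norm, adds the corresponding box constraints to $\P$, and then minimizes $q$ over the resulting bounded polytope to obtain $\bar x$ with $q(\bar x)\le\eta-1<\eta$. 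Some such device is needed; without it your argument does not cover the full hypothesis. (Your parenthetical for $H=0$ is also off --- an arbitrary point of $\P$ need not satisfy $h^\transp x<\eta$ --- but that case is easily repaired by an LP.)

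Apart from this, your construction follows the same route as the paper: start from a point strictly satisfying the quadratic constraint, use an effective Lipschitz bound for $q$ on a box of size polynomial in the data to choose a rational radius $\delta$ with $q\le\eta$ on the box $\C=\{x:\infnorm{x-\bar x}\le\delta\}$, and output a full-dimensional polytope inside $\sQ$. One simplification worth noting: your interior-nudging step (moving $\bar x$ toward an interior point $x^\circ$ of $\P$) is unnecessary. The paper simply takes the polytope $\P\cap\C$; since $\P$ is full-dimensional and convex and $\bar x\in\P$, the set $\P\cap\{x:\infnorm{x-\bar x}\le\epsilon\}$ is full-dimensional for every $\epsilon>0$ even when $\bar x$ lies on the boundary of $\P$, so no perturbation of $\bar x$ within $\P$ is needed.
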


\begin{proof}
Let $q(x) := x^\transp H x + h^\transp x$.
%
First, we claim that 
there is a polynomial time algorithm that finds $\bar x \in \P$ with $q(\bar x) < \eta$.
If $\bar \eta \in \Q$, we let $\bar x$ be an optimal solution to 
$
\min\{q(x) : x \in \P\}
$
found by the Kozlov-Tarasov-Khachiyan algorithm and we have $q(\bar x) = \bar \eta < \eta$.
Thus, we now assume $\bar \eta = -\infty$.
Consider the convex quadratic set $\sQ' := \{x \in \P : x^\transp H x + h^\transp x \le \eta -1 \}$.
We can assume without loss of generality that the data defining $\sQ'$ is integer.
This can be done by scaling, at the expense of multiplying the size of the system defining $\sQ'$
by $n^2$ (see, e.g., remark~1.1 in \cite{ConCorZamBook}).
Denote by $s$ the size of the obtained system with integer data defining $\sQ'$.
Since $\bar \eta = -\infty$, the set $\sQ'$ is nonempty, and it follows from theorem~1 in \cite{Kha83} that there exists $x^* \in \sQ'$ that satisfies 
\begin{align*}
\norm{x^*} 
\le (2n 2^s)^{2^4 n}
\le (2s 2^s)^{2^4 s}
\le (2^{2s})^{2^4 s}
= 2^{2^5s^2}.
\end{align*}
Therefore, $x^*$ satisfies the $2n$ inequalities $-2^{2^5 s^2} \le x_i \le 2^{2^5 s^2}$, for $i = 1,2,\dots,n$.
We solve, with the Kozlov-Tarasov-Khachiyan algorithm, the convex quadratic programming problem
\begin{align}
\label{eq some cont min inproof v2}
\min\bra{q(x) : x \in \P, \ -2^{2^5 s^2} \le x_i \le 2^{2^5 s^2}, \forall i = 1,2,\dots,n}.
\end{align}
Since the feasible region is bounded, the algorithm returns an optimal solution, which we denote by $\bar x$.
We then have $q(\bar x) \le \eta - 1 < \eta$.
This concludes the proof of our claim that there is a polynomial time algorithm that finds $\bar x \in \P$ with $q(\bar x) < \eta$.


%
We define $\alpha := 2^{\size(H,h)}$ and $\beta := 2^{\size(\bar x)}+1$
and we apply a classic result on Lipschitz continuity of a polynomial on a box,
which is lemma~3.1 in \cite{BiedPHil23MPB} (with $d:=2$, $H:=\alpha$, $M:=\beta$).
We obtain
\begin{align}
\label{eq Lip 1}
\abs{q(y) - q(z)} 
\leq 2 \alpha \beta n (n+2) \infnorm{y - z} && \forall y,z \in [-\beta,\beta]^n.
\end{align}
%
Now define $\delta := \min\{1, \ (\eta - \bar \eta) / (2 \alpha \beta n (n+2))\}$ and 
\begin{align*}
\C := \{ x \in \R^n : \infnorm{x-\bar x} \le \delta \} = \{ x \in \R^n : \bar x - \delta \le x \le \bar x + \delta \}.
\end{align*}
Note that the size of $\delta$ is polynomial in the size of $W,w,H,h,\eta$.
Clearly $\bar x \in [-2^{\size(\bar x)}, 2^{\size(\bar x)}]^n \subseteq [-\beta,\beta]^n$.
Furthermore, since 
$\delta \le 1$, we have $\C \subseteq [-\beta,\beta]^n$.
From \eqref{eq Lip 1} we then obtain 
\begin{align*}
\abs{q(x) - q(\bar x)} 
\leq 2 \alpha \beta n (n+2) \infnorm{x - \bar x} && \forall x \in \C.
\end{align*}
Hence, for every $x \in \C$,
\begin{align*}
q(x) - q(\bar x)
\le \abs{q(x) - q(\bar x)} 
\leq 2 \alpha \beta n (n+2) \infnorm{x - \bar x}
\le 2 \alpha \beta n (n+2) \delta
\le \eta - \bar \eta,
\end{align*}
where in the last inequality we used the definition of $\delta$.
We have thereby shown that $q(x) \le \eta$ holds for every $x \in \C$.

Consider now the polytope $\P \cap \C$, which is contained in $\sQ$.
Since $\P$ is full-dimensional and $\bar x \in P$, for each $\epsilon > 0$ the set $\P \cap \{ x \in \R^n : \infnorm{x-\bar x} \le \epsilon \}$ is full-dimensional, thus so is $\P \cap \C$.
\end{proof}

Leveraging on \cref{lem low dim 1,lem low dim 2,lem full-dim}, we can now 
give a characterization of full-dimensional convex quadratic sets
that can be checked in polynomial time.

\begin{proposition}
\label{prop full-dim}
Let $\sQ$ be a convex quadratic set
\begin{align*}
\sQ = \{x \in \R^n : Wx \le w , \ x^\transp H x + h^\transp x \le \eta \},
\end{align*}
where $W \in \Q^{m \times n}$, $w \in \Q^m$, $H \in PSD^n(\Q)$, $h \in \Q^n$, and $\eta \in \Q$.
Then $\sQ$ is full-dimensional if and only if $\P := \{x \in \R^n : Wx \le w\}$ is full-dimensional and
\begin{align*}
\min\{x^\transp H x + h^\transp x : Wx \le w\} < \eta.
\end{align*}
Furthermore, there is a polynomial time algorithm that detects whether $\sQ$ is full-dimensional or not. 
%
%
%
\end{proposition}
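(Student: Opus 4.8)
The plan is to obtain the characterization by combining \cref{lem low dim 1,lem low dim 2,lem full-dim}, which between them exhaust all the relevant configurations of a convex quadratic set, and then to extract the polynomial time test from the subroutines used in those lemmas. Throughout, write $q(x) := x^\transp H x + h^\transp x$, let $\P := \{x \in \R^n : Wx \le w\}$, and set $\bar\eta := \min\{q(x) : x \in \P\}$ and $\tilde\eta := \min\{q(x) : x \in \R^n\}$, so that $\tilde\eta \le \bar\eta$. For the ``if'' direction, suppose $\P$ is full-dimensional and $\bar\eta < \eta$; then the hypotheses of \cref{lem full-dim} hold verbatim, and that lemma yields that $\sQ$ is full-dimensional (in fact it even exhibits a full-dimensional polytope contained in $\sQ$).

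For the ``only if'' direction, suppose $\sQ$ is full-dimensional. Since $\sQ \subseteq \P$, the polyhedron $\P$ is full-dimensional, and in particular $\P \ne \emptyset$, so $\bar\eta \in \R \cup \{-\infty\}$. It remains to prove $\bar\eta < \eta$, and I argue by contradiction, assuming $\bar\eta \ge \eta$; then $\bar\eta$ is finite. If $\bar\eta > \eta$, then no point of $\P$ satisfies $q(x) \le \eta$, so $\sQ = \emptyset$, contradicting full-dimensionality. If $\bar\eta = \eta$, I split on $\tilde\eta \le \bar\eta = \eta$: when $\tilde\eta = \eta$, \cref{lem low dim 1} applies and gives that $\sQ$ is not full-dimensional; when $\tilde\eta < \eta$, \cref{lem low dim 2} applies --- this is the step that uses full-dimensionality of $\P$ --- and again gives that $\sQ$ is not full-dimensional. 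Either way we obtain a contradiction, so $\bar\eta < \eta$, which completes the equivalence.

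For the algorithmic statement, I first test whether $\P$ is full-dimensional, which is well known to be doable in polynomial time (for instance by detecting the implicit equalities of $Wx \le w$ via linear programming, exactly as in the proof of \cref{th full dim poly}). If $\P$ is not full-dimensional, I report that $\sQ$ is not full-dimensional, which is correct by the equivalence. Otherwise $\P$ is full-dimensional, hence nonempty, and I run the Kozlov-Tarasov-Khachiyan algorithm on $\min\{q(x) : x \in \P\}$: it either detects that the infimum equals $-\infty$, or returns a rational optimal solution $\bar x$ with $q(\bar x) = \bar\eta$. In the first case, or if $\bar x$ is returned with $q(\bar x) < \eta$, I report that $\sQ$ is full-dimensional; otherwise I report that it is not. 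Correctness is immediate from the equivalence, and every step runs in polynomial time.

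There is no deep difficulty here once \cref{lem low dim 1,lem low dim 2,lem full-dim} are in hand; the points that require care are making the case split genuinely exhaustive --- in particular treating $\bar\eta = +\infty$ (that is, $\P = \emptyset$) and $\bar\eta = -\infty$, and the boundary case $\bar\eta = \eta$ together with its sub-split on $\tilde\eta$ --- and checking that the convex quadratic minimization over $\P$, including the detection of the unbounded case, is carried out in polynomial time by the Kozlov-Tarasov-Khachiyan algorithm.
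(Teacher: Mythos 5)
Your proposal is correct and follows essentially the same route as the paper: the same trichotomy on $\bar\eta$ versus $\eta$ (with the sub-split on $\tilde\eta$ in the boundary case $\bar\eta=\eta$), the same appeal to \cref{lem low dim 1,lem low dim 2,lem full-dim}, and the same algorithm combining a full-dimensionality test for $\P$ with one run of the Kozlov--Tarasov--Khachiyan algorithm. The only difference is cosmetic: you package the ``only if'' direction as a proof by contradiction, whereas the paper exhausts the cases directly.
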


\begin{proof}
If $\P$ is not full-dimensional, then clearly $\sQ$ is not full-dimensional.
Thus, in the remainder of the proof, we assume that $\P$ is full-dimensional.
Let
\begin{align*}
& \bar \eta := \min\{x^\transp H x + h^\transp x : Wx \le w\}, \\
& \tilde \eta := \min\{x^\transp H x + h^\transp x : x \in \R^n\}.
\end{align*}
If $\bar \eta < \eta$, \cref{lem full-dim} implies that $\sQ$ is full-dimensional.
If $\bar \eta > \eta$, then $\sQ$ is the emptyset, thus we now assume $\bar \eta = \eta$.
%
Clearly, $\tilde \eta \le \bar \eta = \eta$.
Then \cref{lem low dim 1,lem low dim 2} imply that $\sQ$ is not full-dimensional (\cref{lem low dim 1} in the case $\tilde \eta = \eta$ and \cref{lem low dim 2} in the case $\tilde \eta < \eta$). 

To see that there is a polynomial time algorithm that detects whether $\sQ$ is full-dimensional or not, it suffices to observe that: (i) there is a polynomial time algorithm that detects whether $\P$ is full-dimensional or not (see, e.g., \cite{SchBookIP}); (ii) $\bar \eta$ can be found with the Kozlov-Tarasov-Khachiyan algorithm \cite{KozTarKha81}.
\end{proof}

\subsection{Reduction to full-dimensional convex quadratic sets}

In the next result, we provide our extension of \cref{th full dim poly} to convex quadratic sets.
Namely, we show that we can find in polynomial time an affine function that maps a non full-dimensional convex quadratic set to a full-dimensional isomorphic one in a lower-dimensional space, while preserving mixed integer vectors.
To prove this result we use \cref{th full dim poly} and \cref{lem low dim 1,lem low dim 2,lem full-dim}.

\begin{theorem}
\label{th full-dim conv quad}
Let $W \in \Q^{m \times n}$, $w \in \Q^m$, $H \in PSD^n(\Q)$, $h \in \Q^n$, $\eta \in \Q$, $p \in \bra{0,1,\dots,n}$, and consider the sets
\begin{align*}
\sQ:= \bra{x \in \R^n : Wx \le w , \ x^\transp H x + h^\transp x \le \eta }, \qquad \S := \sQ \cap \pare{\Z^p \times \R^{n-p}}.
\end{align*}
There is a polynomial time algorithm that either returns that $\S$ is empty, 
or finds 
$p' \in \bra{0,1,\dots,p}$, $n' \in \bra{p',p'+1,\dots,p'+n-p}$, 
a map $\tau : \R^{n'} \to \R^n$
of the form $\tau(x') = \bar x + Mx'$, with 
$\bar x \in \Z^p \times \Q^{n-p}$ and $M \in \Q^{n \times n'}$ of full rank, such that, 
if we define $W' := WM \in \Q^{m \times n'}$, $w' := w - W \bar x \in \Q^m$,
$H' := M^\transp H M \in PSD^{n'}(\Q)$,
$h' := 2M^\transp H^\transp \bar x + M^\transp h \in \Q^{n'}$, 
$\eta' := \eta - \bar x^\transp H \bar x + h^\transp \bar x \in \Q$,
the convex quadratic set 
\begin{align*}
\sQ' := \bra{x' \in \R^{n'} : W'x' \le w', \ {x'}^\transp H' x' + {h'}^\transp x' \le \eta'}
\end{align*} 
is full-dimensional, and 
\begin{align*}
\sQ & = \tau \pare{\sQ'} \\
\S & = \tau \pare{\sQ' \cap \pare{\Z^{p'} \times \R^{n'-p'}}}.
\end{align*}
\end{theorem}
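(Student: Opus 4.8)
The plan is to determine, up front and once and for all, the affine hull of $\sQ$ as the solution set of an explicit linear system $Vx=v$, and then to obtain the map $\tau$ by a \emph{single} application of \cref{prop gen solution} to $Vx=v$. The step I expect to be the main obstacle is making this run in polynomial time: eliminating one coordinate (one hyperplane) at a time and composing the resulting reductions would in general make the bit sizes explode, which is exactly the difficulty noted after \cref{th full dim poly}. The resolution is the observation that, with one harmless exception, every linear equation that must be added to $Vx=v$ is literally a row of the original system $Wx\le w$; hence $Vx=v$ always has size polynomial in the input, and one reduction at the end suffices.

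Write $q(x):=x^\transp Hx+h^\transp x$. I maintain a subsystem $Vx=v$, its rows among those of $Wx\le w$ (except possibly one additional block described below), cutting out an affine subspace $L:=\{x:Vx=v\}\supseteq\sQ$, together with the invariant that the polyhedron $\P_L:=\{x:Wx\le w\}\cap L$ is full-dimensional relative to $L$; this invariant is (re)established whenever necessary by computing, via linear programming, the implicit equalities of $\{Wx\le w,\ Vx=v\}$ (which are rows of $W$) and appending them to $Vx=v$, exactly as inside the proof of \cref{th full dim poly}, and the loop starts from $V$ empty. Each pass of the loop computes $\bar\eta:=\min\{q(x):Wx\le w,\ Vx=v\}$ with the Kozlov-Tarasov-Khachiyan algorithm (handling $\bar\eta=-\infty$ as in the proof of \cref{lem full-dim}, by first intersecting with a large box) and $\tilde\eta:=\min\{q(x):Vx=v\}$ by linear algebra, and then branches according to the trichotomy of \cref{lem low dim 1,lem low dim 2,lem full-dim} applied to $\sQ$ inside $L$: (i) if $\bar\eta>\eta$ then $\sQ=\emptyset$, hence $\S=\emptyset$ and we stop; (ii) if $\bar\eta<\eta$ then by \cref{lem full-dim} the set $\sQ$ is full-dimensional relative to $L$, so its affine hull is exactly $L$ and we leave the loop; (iii) if $\bar\eta=\eta>\tilde\eta$ then by \cref{lem low dim 2} the set $\sQ$ lies in a proper face $\F$ of $\P_L$ obtained by turning some of the inequalities $Wx\le w$ into equalities, and we append those rows to $Vx=v$ and repeat; (iv) if $\bar\eta=\tilde\eta=\eta$ then, after disposing of the degenerate situation in which $q$ restricts to an affine function on $L$ (where $\sQ$ coincides, inside $L$, with a polyhedron of polynomial size, obtained by appending to $Wx\le w$ one linear inequality equivalent to $q(x)\le\eta$ on $L$, after which one continues purely polyhedrally), \cref{lem low dim 1} gives $\sQ=\{x:Wx\le w\}\cap\A'$, where $\A'\subsetneq L$ is the (polynomial-size) affine set of minimizers of $q$ on $L$ and $q\equiv\eta$ on $\A'$; we append the defining equations of $\A'$ to $Vx=v$ and, since the quadratic inequality is now redundant, drop it and continue the loop as a purely polyhedral reduction. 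In cases (iii) and (iv) the dimension of $L$ strictly decreases, so the loop terminates after at most $n+1$ passes, at which point the affine hull of $\sQ$ equals $L=\{x:Vx=v\}$.

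I then apply \cref{prop gen solution} to the system $Vx=v$ with the mixed integer structure $\Z^p\times\R^{n-p}$. If it reports that $\{x:Vx=v\}$ contains no point of $\Z^p\times\R^{n-p}$, then $\S=\emptyset$ and we stop, since $\sQ\subseteq\{x:Vx=v\}$. Otherwise it returns $p'$, $n'$, and an injective affine map $\tau(x')=\bar x+Mx'$ with $\bar x\in\Z^p\times\Q^{n-p}$ and $M\in\Q^{n\times n'}$ of full column rank, with image $\{x:Vx=v\}$ and such that $\tau$ maps $\Z^{p'}\times\R^{n'-p'}$ bijectively onto $\{x:Vx=v\}\cap(\Z^p\times\R^{n-p})$. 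Since $\sQ=\{x:Wx\le w,\ q(x)\le\eta\}$ and $\sQ\subseteq\{x:Vx=v\}$, the substitution $x=\tau(x')$ makes the constraints $Vx=v$ vacuous and rewrites $\sQ$ as $\sQ'=\{x'\in\R^{n'}:W'x'\le w',\ {x'}^\transp H'x'+{h'}^\transp x'\le\eta'\}$, where $W'=WM$, $w'=w-W\bar x$, and $H'$, $h'$, $\eta'$ are obtained by expanding $q(\bar x+Mx')$; in particular $H'=M^\transp HM$ is positive semidefinite, so $\sQ'$ is a convex quadratic set of exactly the form in the statement. Because $\tau$ restricts to an affine bijection $\R^{n'}\to\{x:Vx=v\}\supseteq\sQ$ we have $\sQ'=\tau^{-1}(\sQ)$, hence $\sQ=\tau(\sQ')$; since $\sQ$ is full-dimensional relative to its affine hull $\{x:Vx=v\}$, the set $\sQ'$ is full-dimensional in $\R^{n'}$; and intersecting with the mixed integer lattice and using the bijection above gives $\S=\tau(\sQ'\cap(\Z^{p'}\times\R^{n'-p'}))$.

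Finally, for the running time: $V$ is, throughout, a submatrix of $W$ augmented by at most one polynomial-size block, so every polyhedron and affine system handled by the algorithm has size polynomial in the input; \cref{th full dim poly}, \cref{prop gen solution}, the Kozlov-Tarasov-Khachiyan algorithm, and the elementary linear programming and linear algebra subroutines all run in time polynomial in the (polynomial) size of their inputs; and the loop makes at most $n+1$ passes, followed by a single call to \cref{prop gen solution}. Hence the algorithm runs in polynomial time.
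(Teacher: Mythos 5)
Your proposal is correct and rests on the same three pillars as the paper's proof: the trichotomy given by \cref{lem low dim 1,lem low dim 2,lem full-dim}, a descent through faces of $\P$ whose descriptions stay (essentially) inside the original system $Wx\le w$ so that bit sizes cannot explode, and \cref{prop gen solution} to produce the lattice-preserving map. The organizational difference is real, though: the paper applies \cref{th full dim poly} at \emph{every} iteration, re-parametrizing the current face $\F$ as a full-dimensional polyhedron $\F^\circ$ in a lower-dimensional space so that the three lemmas apply verbatim, and then discards that parametrization when it recurses on the face $\G=\tau^\circ(\G^\circ)$ of $\P$; the final $\tau$ is built entirely in the last iteration. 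You instead keep everything in $\R^n$, accumulate the affine hull as an explicit system $Vx=v$ of polynomial size, and invoke \cref{prop gen solution} exactly once at the end. Your version buys a single, cleaner terminal reduction; the price is that you apply \cref{lem low dim 2,lem full-dim} (and the computation of $\tilde\eta$ and of $\A'$) \emph{relative to} $L$, whereas the lemmas are stated for full-dimensional $\P$ and for the unconstrained minimum over all of $\R^n$. This is the one point where your write-up needs a few more words: you should say that the relative versions follow by (virtually) parametrizing $L$ by an affine bijection $\R^{\dim L}\to L$, applying the lemmas there, and pulling the conclusions back --- in particular checking that the proper face produced by \cref{lem low dim 2} pulls back to rows of $W$ set to equality, and that the minimizer set $\A'$ of $q|_L$ pulls back to a polynomial-size linear system such as $\{x\in L: N^\transp(2Hx+h)=0\}$ for a basis matrix $N$ of the lineality space of $L$. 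This parametrization is precisely what the paper's per-iteration call to \cref{th full dim poly} performs explicitly, so the gap is presentational rather than mathematical; with that sentence added, your argument is a valid and arguably tidier proof of the theorem.
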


\begin{proof}
If $H=0$, the result follows by applying \cref{th full dim poly} to the polyhedron 
$
\{x \in \R^n : Wx \le w, \ h^\transp x \le \eta \},
$
thus we assume $H$ nonzero.

The algorithm that we present is recursive.
Each iteration starts with a face $\F$ of the polyhedron
\begin{align*}
\P := \{x \in \R^n : Wx \le w \}
\end{align*}
such that 
\begin{align*}
\sQ = \bra{x \in \F : x^\transp H x + h^\transp x \le \eta }.
\end{align*}
In the first iteration we have $\F = \P$, and in each iteration the dimension of $\F$ strictly decreases.
The algebraic description of $\F$ is obtained from $Wx \le w$ by setting some inequality constraints to equality. Next, we describe an iteration of the algorithm.

First, we apply \cref{th full dim poly} to $\F$, in order to consider an ``equivalent'' full-dimensional $\F^\circ$. We detail this reduction.
If \cref{th full dim poly} returns that $\F \cap \pare{\Z^p \times \R^{n-p}}$ is empty, then $\S$ is empty and we are done.
Otherwise, \cref{th full dim poly} finds 
$p^\circ$, $n^\circ$,
and an affine map $\tau^\circ : \R^{n^\circ} \to \R^n$
such that
the preimage $\F^\circ$ of $\F$
is full-dimensional, and 
\begin{align*}
\F & = \tau^\circ \pare{\F^\circ} \\
\F \cap \pare{\Z^p \times \R^{n-p}} & = \tau^\circ \pare{\F^\circ \cap \pare{\Z^{p^\circ} \times \R^{n^\circ-p^\circ}}}.
\end{align*}
Let $H^\circ\in \Q^{n^\circ \times n^\circ}$,
$h^\circ \in \Q^{n^\circ}$, 
$\eta^\circ \in \Q$, and 
\begin{align*}
& \sQ^\circ
:= \bra{x^\circ \in \F^\circ : {x^\circ}^\transp H^\circ x^\circ + {h^\circ}^\transp x^\circ \le \eta^\circ},
\end{align*}
so that
\begin{align*}
\sQ
= \tau^\circ \pare{\sQ^\circ}.
\end{align*}
Clearly, we also have 
\begin{align*}
\S & = \tau^\circ \pare{\sQ^\circ \cap \pare{\Z^{p^\circ} \times \R^{n^\circ-p^\circ}}}.
\end{align*}


For ease of notation, let $q^\circ(x^\circ) := {x^\circ}^\transp H^\circ x^\circ + {h^\circ}^\transp x^\circ$.
Solve the convex quadratic programming problem
\begin{align*}
\bar \eta := \min\{q^\circ(x^\circ) : x \in \F^\circ\}
\end{align*}
with the Kozlov-Tarasov-Khachiyan algorithm \cite{KozTarKha81} and note that $\bar \eta \in \{-\infty\} \cup \Q$.
If $\bar \eta > \eta$, then $\sQ^\circ = \emptyset$, hence $\S = \emptyset$ and we are done.
If $\bar \eta < \eta$, then \cref{lem full-dim} implies that $\sQ^\circ$ is full-dimensional.
Then we define $\tau:=\tau^\circ$ and we are done.
Thus in the remainder of the iteration we assume $\bar \eta = \eta$.

Next, solve the convex quadratic programming problem
\begin{align*}
\tilde \eta := \min\{q^\circ(x^\circ) : x^\circ \in \R^{n^\circ}\}
\end{align*}
with the Kozlov-Tarasov-Khachiyan algorithm, and note that $\tilde \eta \in \{-\infty\} \cup \Q$.
Clearly, $\tilde \eta \le \eta$.
In the case $\tilde \eta = \eta$ we employ \cref{lem low dim 1} and obtain an affine subspace $\A$ of  $\R^{n^\circ}$ such that $\sQ^\circ = \F^\circ \cap \A$.
We then apply \cref{th full dim poly} to the polyhedron $\sQ^\circ$.
If \cref{th full dim poly} returns that $\sQ^\circ \cap \pare{\Z^{p^\circ} \times \R^{n^\circ-p^\circ}}$ is empty, then $\S$ is empty and we are done.
Otherwise, \cref{th full dim poly} finds an affine map $\tau^\bullet$.
We then return the affine map $\tau := \tau^\circ \circ \tau^\bullet$ and we are done.
Hence, we now consider the case $\tilde \eta < \eta$.
We employ \cref{lem low dim 2} and find a proper face $\G^\circ$ of $\F^\circ$ (obtained from the description of $\F^\circ$ by setting some inequality constraints to equality) that contains $\sQ^\circ$.
Note that $\tau^\circ(\G^\circ)$ is a proper face of $\F$, that we denote by $\G$.
Since faces of faces of a polyhedron are again faces of a polyhedron, 
$\G$ is also a face of $\P$.
We then recursively apply the algorithm described so far to the face $\G$ of $\P$.

At each iteration, the dimension of the face of $\P$ considered decreases by at least one, thus the algorithm performs a polynomial number of arithmetic operations.
The size of the numbers constructed by the algorithm is also polynomially bounded.
This is because each iteration starts with a face of the polyhedron $\P$, which can be obtained from its description by setting some inequality constraints to equality.
\end{proof}



\section{Complexity of Mixed Integer Convex Quadratic Programming}
\label{sec proof of main}


In this section, we showcase the applicability and the potential impact of the fundamental results obtained in \cref{sec linear,sec cqs}.
We explain how these results can be used 
to revive Lenstra's original approach for ellipsoid rounding presented in \cite{Len83}.
In turn, this allows us to design an algorithm for mixed integer convex quadratic programming that is FPT with parameter the number of integer variables.
This approach has the key advantage that it does not use the ellipsoid method as a subroutine.

A \emph{mixed integer convex quadratic programming} (MICQP) problem is defined as an optimization problem of the form
\begin{align}
\label[problem]{prob MICQP}
\tag{MICQP}
\begin{split}
\min & \quad x^\transp H x + h^\transp x \\
\st & \quad Wx \le w \\
& \quad x \in \Z^p \times \R^{n-p}.
\end{split}
\end{align}
Here $H \in PSD^n(\Q)$, which is the set of symmetric positive semidefinite matrices in $\Q^{n \times n}$, $h \in \Q^n$, $W \in \Q^{m \times n}$, $w \in \Q^m$, and $p \in \{0,1,\dots,n\}$.
%
Following \cite{KozTarKha81}, we say that an algorithm \emph{accurately solves} \cref{prob MICQP}
if:
\begin{enumerate}[leftmargin=5mm]
\item
\textbf{(Feasibility)}
The algorithm determines whether \cref{prob MICQP} is \emph{feasible or infeasible,} i.e., if the \emph{feasible region} $\{x \in \Z^p \times \R^{n-p} : Wx \le w\}$
is empty or nonempty;
\item 
\textbf{(Boundedness)}
In case \cref{prob MICQP} is feasible, the algorithm establishes whether the problem is \emph{bounded or unbounded,} i.e., if the objective function is bounded or unbounded on the feasible region;
\item
\textbf{(Optimality)}
If \cref{prob MICQP} is feasible and bounded, the algorithm finds its minimal value and an \emph{optimal solution,} i.e., a point in the feasible region where the minimum is attained.
\end{enumerate}
It is fundamental to observe that designing an algorithm that accurately solves \cref{prob MICQP} can be significantly more demanding than designing an algorithm that \emph{approximately} solves \cref{prob MICQP}, which is often the objective when considering convex optimization problems.
We are now ready to state our complexity result for MICQP.
\begin{theorem}
\label{th main opt}
There is an algorithm that accurately solves \cref{prob MICQP}, 
which is FPT with parameter $p$.
\end{theorem}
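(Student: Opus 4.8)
The plan is to decompose the task along the three requirements of "accurately solving" — feasibility, boundedness, optimality — and to handle each with an FPT subroutine, using the reduction machinery of Sections~\ref{sec linear} and~\ref{sec cqs} to keep everything in the full-dimensional regime where Lenstra-type arguments work cleanly. The excerpt itself signals that the proof rests on three building blocks: \cref{prop sandwich} (a L\"owner–John-type sandwiching of the projection of a bounded full-dimensional convex quadratic set between two concentric ellipsoids in $\R^p$ with ratio $O(p^{3/2})$), \cref{prop main feasibility} (an FPT algorithm for the feasibility problem), and \cref{prop unbounded} (an FPT characterization of boundedness). So the first thing I would do is state these three propositions and then show how to assemble them.

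First, for \textbf{feasibility}, invoke \cref{prop main feasibility} directly on the polyhedron $\{x : Wx\le w\}$ intersected with $\Z^p\times\R^{n-p}$; note that the feasible region here is just a polyhedron (no quadratic constraint), so this is a special case. Second, assuming feasibility, for \textbf{boundedness} apply \cref{prop unbounded}: if the objective is unbounded below, report that and stop; otherwise we know a finite optimum exists. Third, for \textbf{optimality}, the idea is a binary search / guessing of the optimal value $\eta^*$ reducing optimization to a sequence of feasibility questions of the form "does the convex quadratic set $\{x : Wx\le w,\ x^\transp H x + h^\transp x \le \eta\}$ contain a mixed integer point?" — which is exactly \cref{prop main feasibility}. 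The delicate point is that the optimal value is attained and lies in a controlled range with polynomially bounded size; once a bound $\Delta$ on $\size(\eta^*)$ is available (obtainable from the size of the data via standard estimates on optimal vertices/minimizers of convex quadratic programs, cf.\ \cite{KozTarKha81,Kha83}), a binary search over an interval of length $2^{\poly(\size)}$ with precision $2^{-\poly(\size)}$ terminates in polynomially many steps, each step an FPT feasibility call. To actually \emph{recover} an optimal solution (not just the value), at the end apply \cref{th full-dim conv quad} to the convex quadratic set with $\eta=\eta^*$ to pass to a full-dimensional isomorphic copy in lower dimension, and there produce a mixed integer point using the constructive version of \cref{prop main feasibility}; the affine map $\tau$ pulls it back to an optimal solution of \cref{prob MICQP}.

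The real engine — and the place where \cref{prop sandwich} and the reduction results are used — is inside the proof of \cref{prop main feasibility}: given a convex quadratic set $\sQ$, first use \cref{prop full-dim} and \cref{th full-dim conv quad} to reduce (while preserving mixed integer points) to a full-dimensional $\sQ'$ of dimension $n'$, with $p'\le p$ integer variables; if $\sQ'$ is bounded, sandwich $\proj_{\R^{p'}}(\sQ')$ between concentric ellipsoids of ratio $O({p'}^{3/2})$ via \cref{prop sandwich}, then enumerate the lattice points of $\Z^{p'}$ inside the outer ellipsoid by a Lenstra-type branching (flatness theorem / shortest-vector-based splitting), and for each fixed integer part solve a convex quadratic feasibility problem in the continuous variables via Kozlov–Tarasov–Khachiyan; the unbounded case is reduced to the bounded one via \cref{prop unbounded} and its attendant recession-cone analysis. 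The main obstacle I anticipate is precisely this bounded case of \cref{prop main feasibility}: making the branching tree have width bounded by a function of $p'$ alone (independent of $n'$ and of $\size$), which is exactly what the $O(p^{3/2})$ sandwiching ratio buys you, and controlling bit-sizes through the recursion so the whole thing stays polynomial in $\size$ for fixed $p$ — the reduction lemmas of Section~\ref{sec cqs} were built to guarantee this, but verifying the size bounds and the interplay with the unbounded-direction handling is the technically heaviest part.
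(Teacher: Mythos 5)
Your proposal matches the paper's proof in all essentials: the same three-part decomposition (feasibility via \cref{prop main feasibility}, boundedness via \cref{prop unbounded}, then binary search on the objective value reducing optimality to FPT feasibility calls over the convex quadratic sets $\{x: Wx\le w,\ x^\transp Hx+h^\transp x\le\eta\}$), with the range and precision of the search controlled by the solution-norm bounds of \cite{Kha83} and the denominator bound of \cite{KozTarKha81}, exactly as in the paper. The only cosmetic difference is your extra final application of \cref{th full-dim conv quad} to recover an optimal point, which is unnecessary since \cref{prop main feasibility} already returns a feasible mixed integer vector when one exists.
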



\subsection{L\"owner-John-type algorithm}
\label{sec cqs LJ}

%
%


The first ingredient to prove \cref{th main opt} is a L\"owner-John-type algorithm for projected convex quadratic sets, which is of independent interest.
Given a convex quadratic set $\sQ$ in $\R^n$ that is bounded and full-dimensional, this algorithm constructs two concentric ellipsoids in $\R^p$ that sandwich the projection of $\sQ$ onto $\R^p$.
The ratio between the ellipsoids depends only on the dimension $p$ of the subspace rather than the dimension $n$ of the space.
Our proof is based on the technique introduced by Lenstra for mixed integer linear programming \cite{Len83} and makes use of the structural and algorithmic results obtained in \cref{sec cqs}.
The key difficulty in extending Lenstra's technique lies in the fact that we are not able to minimize a linear function over $\sQ$ in polynomial time.
This is not surprising, given that this problem can have only irrational optimal solutions.
We overcome this obstacle in two different ways, depending on our goals: 
i) in \cref{claim small simplex} we employ \cref{prop full-dim,lem full-dim} to find a polytope $\F$ contained in $\sQ$ and then minimize over $\F$ instead of over $\sQ$;
ii) in \cref{claim large simplex} we solve, instead, only feasibility problems over convex quadratic sets, which can be done with the Kozlov-Tarasov-Khachiyan algorithm \cite{KozTarKha81}.

We now introduce some notation which is needed to state this result.
A \emph{ball} in $\R^n$ with center $a \in \R^n$ and positive radius $r \in \R$ is a set of the form
\begin{align*}
\B^n(a,r) := \{x \in \R^n : \norm{x-a} \le r\}.
\end{align*}
Given a vector $v \in \R^n$, we denote by $\proj_p (v)$ the subvector of $v$ containing only the first $p$ components.
Given a set $\S \subseteq \R^n$, we denote by $\proj_p (\S)$ the orthogonal projection of $\S$ onto the space $\R^p$ of the first $p$ variables, i.e., $\proj_p (\S) = \{\proj_p (v) : v \in \S\}$.
%
We also note that we will be using the number $\ceil{\sqrt{p}}$, for a positive integer $p$.
It is easy to see that such number can be computed, using binary search, in time polynomial in $\log (p)$.
We are now ready to present our L\"owner-John-type result.

\begin{proposition}
\label{prop sandwich}
Let $\sQ \subseteq \R^n$ be a convex quadratic set that is bounded and full-dimensional, and
let $p \in \{1,\dots,n\}$.
There is a polynomial time algorithm which finds 
a map $\tau : \R^p \to \R^p$
of the form $\tau(y) = By$ with $B$ invertible in $\Q^{p \times p}$,
a vector $a \in \Q^p$, and positive numbers $r, R \in \Q$ satisfying $R / r \le 4 \ceil{\sqrt{p}}^3$ such that
\begin{align*}
\B^p(a, r) \subseteq \tau (\proj_p(\sQ)) \subseteq \B^p(a, R).
\end{align*}
\end{proposition}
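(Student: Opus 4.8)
The plan is to mimic Lenstra's original argument for producing a well-rounded body, but using the tools of \cref{sec cqs} in place of linear optimization over $\sQ$. We begin by applying a volume-reduction / simplex-finding step inside the projection $\P_p := \proj_p(\sQ)$, which is a bounded full-dimensional convex body in $\R^p$ (full-dimensionality of the projection follows since $\sQ$ is full-dimensional in $\R^n$ and $p \le n$; boundedness is inherited directly). The idea is to iteratively build a simplex $\Delta = \conv\{v^0, v^1, \dots, v^p\} \subseteq \P_p$ of \emph{large} volume relative to $\P_p$, by the usual greedy procedure: having fixed $v^0, \dots, v^{j-1}$, choose $v^j \in \P_p$ maximizing the distance to the affine hull of the previously chosen points, i.e.\ maximizing a linear functional $\ell_j$ over $\P_p$ in absolute value. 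The key point --- and this is where the machinery of \cref{sec cqs} enters --- is that maximizing a linear functional over $\P_p$ exactly amounts to maximizing a linear functional over $\sQ$, which we cannot do in polynomial time and which may have irrational optima. To circumvent this, in the spirit of the excerpt's remarks (cf.\ \cref{claim small simplex}), I would instead use \cref{lem full-dim} (invoked via \cref{prop full-dim}) to extract a full-dimensional \emph{polytope} $F \subseteq \sQ$ given by an explicit rational inequality description, and carry out the greedy simplex construction maximizing $\ell_j$ over $\proj_p(F)$ (a rational polytope) instead of over $\P_p$. Maximizing a linear functional over a rational polytope is just linear programming, hence polynomial time with rational optima.

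The second main step is to verify that the simplex so obtained is not too small, i.e.\ that $\P_p$ is contained in a bounded dilation of $\Delta$. Here I would use the complementary idea (cf.\ \cref{claim large simplex}): rather than certifying $\P_p \subseteq c \cdot \Delta$ directly, I test, for each facet direction of $\Delta$, whether $\sQ$ (equivalently $\P_p$) extends significantly beyond the corresponding supporting hyperplane, by solving a \emph{feasibility} problem over a convex quadratic set --- namely, whether $\sQ$ meets a halfspace of the form $\{\ell_j^\transp \proj_p(x) \ge \theta\}$ for a suitable threshold $\theta$. Each such feasibility query is a convex quadratic feasibility problem, solvable by the Kozlov-Tarasov-Khachiyan algorithm \cite{KozTarKha81}. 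If some query is feasible, we use the returned point to enlarge the simplex and repeat; since volume strictly increases by a fixed factor at each enlargement and $\P_p$ is bounded, this terminates after polynomially many rounds. When no query succeeds, the standard simplex estimate gives $\P_p \subseteq (p+2)\Delta$ (dilated about the centroid), i.e.\ the simplex is ``fat.''

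Once we have a fat simplex $\Delta \subseteq \P_p \subseteq (p+2)\Delta$, the L\"owner--John step is the classical one: let $T$ be the affine map sending $\Delta$ to the standard regular simplex (or, equivalently after the linear part $B := T$, sending $\Delta$'s edge-vector matrix to the identity), and use the known fact that a regular simplex in $\R^p$ is sandwiched between concentric balls of ratio $p$, so that its dilation by $(p+2)$ is sandwiched with ratio $p(p+2)$. Taking $B$ rational by a small perturbation (rounding the vertices of $\Delta$ to nearby rational points, which the LP already provides), choosing $a$ to be the centroid of the image simplex, and choosing $r, R$ rational with $R/r$ a slight inflation of $p(p+2)$, one checks $4\ceil{\sqrt p}^3 \ge p(p+2)$ for all positive integers $p$ with room to spare, so the stated ratio bound holds.

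I expect the main obstacle to be the interplay between the two sub-procedures and the bookkeeping needed to keep all numbers of polynomial size: each enlargement step changes the simplex, hence the linear functionals $\ell_j$ defining the next round of LP's and feasibility queries, and one must argue that the bit-sizes do not blow up over the polynomially many iterations --- this is controlled because every new vertex is either an LP optimum over a polytope whose description has controlled size, or a point returned by the Kozlov-Tarasov-Khachiyan algorithm with an a priori size bound. A secondary technical point is the passage from $\sQ$ to the rational polytope $F$ of \cref{lem full-dim} while ensuring $\proj_p(F)$ is still full-dimensional in $\R^p$ and large enough that the greedy simplex it yields is already comparable to $\P_p$; this is where \cref{lem full-dim}'s guarantee of a full-dimensional polytope, not merely a point, is essential.
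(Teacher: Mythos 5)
Your first two steps coincide with the paper's proof: an initial full-dimensional simplex in $\proj_p(\sQ)$ obtained by linear programming over the rational polytope $\F\subseteq\sQ$ supplied by \cref{prop full-dim,lem full-dim}, followed by iterative enlargement driven by Kozlov--Tarasov--Khachiyan feasibility queries over convex quadratic sets, with termination certified by geometric volume growth against an a priori volume bound. The genuine gap is in your final L\"owner--John step. You pass to the classical sandwich ``regular simplex between concentric balls of ratio $p$'' together with the outer containment $\proj_p(\sQ)\subseteq (p+2)\Delta$, obtaining $R/r \approx p(p+2)=\Theta(p^2)$, and then assert that $4\ceil{\sqrt p}^3\ge p(p+2)$ for all $p$. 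That inequality is false: for $p=16$ one has $4\ceil{\sqrt{16}}^3=256<288=16\cdot 18$, and asymptotically $p(p+2)=\Theta(p^2)$ while $4\ceil{\sqrt p}^3=\Theta(p^{3/2})$, so your construction cannot meet the stated ratio for large $p$ no matter how the constants are tuned.

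The point you are missing is that the dilated simplex is a needlessly loose outer body. After normalizing $\Delta$ to the standard simplex $\{z\ge 0,\ \sum_i z_i\le 1\}$, the per-facet conditions from the enlargement phase (no point of $\sQ$ lies more than $\tfrac32$ facet-widths beyond any facet) place the normalized image of $\proj_p(\sQ)$ inside the \emph{box} $[-3/2,3/2]^p$, whose circumradius about any point near the origin is only $O(\sqrt p)$ --- far smaller than the $\Theta(p)$ circumradius of $(p+2)\Delta$. The paper then centers both balls at $\tilde a=\tilde e/(p+\ceil{\sqrt p})$ (near the vertex $0$ of the standard simplex, not at its centroid), for which the inscribed ball in the standard simplex has radius $r=1/(p+\ceil{\sqrt p})$ and the box is contained in the ball of radius $R=2\ceil{\sqrt p}$, giving $R/r=2\ceil{\sqrt p}(p+\ceil{\sqrt p})\le 4\ceil{\sqrt p}^3$. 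So your argument is repairable, but only by replacing the ``dilated simplex plus classical ratio $p$'' step with the box-based outer bound and this off-center choice of $a$; as written, the claimed ratio bound does not follow.
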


\begin{proof}
Let $\sQ = \{x \in \R^n : Wx \le w , \ x^\transp H x + h^\transp x \le \eta \}$,  where $W \in \Q^{m \times n}$, $w \in \Q^m$, $H \in PSD^n(\Q)$, $h \in \Q^n$, and $\eta \in \Q$.

\begin{claim}
\label{claim small simplex}
There is a polynomial time algorithm which finds affinely independent vectors $\tilde v^0, \tilde v^1, \ldots, \tilde v^p$ in $\proj_p(\sQ)$.
\end{claim}

\begin{cpf}
Since $\sQ$ is full-dimensional, using \cref{prop full-dim,lem full-dim}, we find a full-dimensional polytope $\F$ contained in $\sQ$.
To prove the claim, it suffices to find vectors $v^0, v^1, \ldots, v^p$ in $\F$ such that $\tilde v^0 := \proj_p (v^0), \tilde v^1 :=\proj_p (v^1), \ldots, \tilde v^p :=\proj_p (v^p)$ are affinely independent.
This can be done as follows.

The first vector $v^0$ in $\F$ can be found by minimizing an arbitrary linear function over $\F$, with Khachiyan's algorithm \cite{Kha79}.
Suppose now that we have found vectors $v^0, v^1, \ldots, v^t$ in $\F$, with $t<p$ such that $\tilde v^0, \tilde v^1, \ldots, \tilde v^t$ are affinely independent.
We construct $\tilde c \in \Q^p$ orthogonal to the affine hull of $\tilde v^0, \tilde v^1, \ldots, \tilde v^t$ and we define $c \in \Q^n$ as $c_j := \tilde c_j$ for $j=1,2,\dots,p$, and $c_j := 0$ for $j=p+1,p+2,\dots,n$.
We solve the two linear programming problems in $\R^n$ given by
\begin{align*}
\min\{c^\transp x : x \in \F \}, \qquad \max\{c^\transp x : x \in \F \}.
\end{align*}
Since $\F$ is bounded and full-dimensional, one of the two optimal solutions found must be a vector $v^{t+1}$ in $\F$ for which $c^\transp v^{t+1} \neq c^\transp v^0$.
Then $\tilde v^0, \tilde v^1, \ldots, \tilde v^t, \tilde v^{t+1}$ are affinely independent.
After $p$ steps we have found vectors $v^0, v^1, \ldots, v^p$ in $\F$ such that $\tilde v^0, \tilde v^1, \ldots, \tilde v^p$ are affinely independent.
\end{cpf}

\begin{claim}
\label{claim large simplex}
There is a polynomial time algorithm which finds vectors $\tilde v^0, \tilde v^1, \ldots, \tilde v^p$ in $\proj_p(\sQ)$ whose convex hull is given by inequalities ${\tilde c^i}^\transp y \le d_i$, $i=0,1,\dots,p$, such that, for all $i, j \in\{0,1, \ldots, p\}$ with $i \neq j$, we have 
\begin{align*}
\begin{split}
& {\tilde c^i}^\transp \tilde v^i < {\tilde c^i}^\transp \tilde v^j = d_i \\
& \abs{d_i - {\tilde c^i}^\transp y} \le \frac{3}{2} (d_i - {\tilde c^i}^\transp \tilde v^i) \qquad \forall y \in \proj_p(\sQ).
\end{split}
\end{align*}
\end{claim}

\begin{cpf}
Denote by $\S_0$ the simplex in $\R^p$ with vertices $\tilde v^0, \tilde v^1, \ldots, \tilde v^p$ obtained in \cref{claim small simplex}.
Clearly, $\S_0$ is contained in $\proj_p(\sQ)$.
Starting from $\S_0$, we iteratively apply the following procedure to generate a larger simplex in $\proj_p(\sQ)$.

\smallskip
\noindent
\ul{Iteration $t$ of the procedure.} 
Let $\S_{t-1}$ be the simplex in $\R^p$ with vertices $\tilde v^0, \tilde v^1, \ldots, \tilde v^p$.
We construct an inequality description ${\tilde c^i}^\transp y \le d_i$, $i=0,1,\dots,p$, of $\S_{t-1}$.
Without loss of generality we assume that, for every $i, j \in\{0,1, \ldots, p\}$ with $i \neq j$, we have ${\tilde c^i}^\transp \tilde v^i < {\tilde c^i}^\transp \tilde v^j = d_i$.
For each $i=0,1,\dots,p$, we define $c^i \in \Q^n$ as $c^i_j := \tilde c^i_j$ for $j=1,2,\dots,p$, and $c^i_j := 0$ for $j=p+1,p+2,\dots,n$, and we solve the two feasibility problems in $\R^n$ over the two sets
\begin{align}
\label{eq enlarge}
\bra{x \in \sQ : {c^i}^\transp x \le d_i - \frac 32 (d_i - {\tilde c^i}^\transp \tilde v^i) }, \quad 
\bra{x \in \sQ : {c^i}^\transp x \ge d_i + \frac 32 (d_i - {\tilde c^i}^\transp \tilde v^i) }.
\end{align}
Note that each such feasibility problem is over a convex quadratic set, and therefore it can be solved using the Kozlov-Tarasov-Khachiyan algorithm, by minimizing the convex quadratic function over the linear constraints.

If none of the problems {\eqref{eq enlarge}} is feasible, the procedure terminates.
Otherwise, at least one of the problems {\eqref{eq enlarge}} is feasible, we let ${v^i}'$ be a feasible point found, and we set ${\tilde v^i}' := \proj_p({v^i}')$.
Then, we define $\S_{t}$ to be the simplex in $\R^p$ obtained from $\S_{t-1}$ by replacing $\tilde v^i$ with ${\tilde v^i}'$, i.e., the simplex with vertices $\tilde v^0, \tilde v^1, \ldots, \tilde v^{i-1}, {\tilde v^i}', \tilde v^{i+1}, \ldots, \tilde v^p$.
Clearly, $\S_{t}$ is contained in $\proj_p(\sQ)$.
This concludes the description of iteration $t$ of the procedure.

\smallskip
\noindent
\ul{Termination of the procedure.} 
The above procedure terminates after a polynomially bounded number of iterations.
This is because of the following three facts:

Fact 1: 
$\vol(\S_0)$ is positive and its size is polynomial in $\size(H,h,W,w,\eta)$.
To show this fact, denote by $\tilde v^0, \tilde v^1, \ldots, \tilde v^p$ the vertices of $\S_0$ obtained in \cref{claim small simplex}.
The size of each vector $\tilde v^0, \tilde v^1, \ldots, \tilde v^p$ is polynomial in $\size(H,h,W,w,\eta)$, and $\delta = \abs{\det M} / p!$ where $M$ is the matrix with columns $\tilde v^1- \tilde v^0, \ldots, \tilde v^{p} - \tilde v^0$.

Fact 2: 
For each positive integer $t$, we have 
\begin{align*}
\vol(\S_{t}) \ge (3/2)^t \vol(\S_{0}).
\end{align*}
We now show this fact. 
From the definition of the sets \eqref{eq enlarge}, we obtain the following lower bound on the volume increase from $\S_{t-1}$ to $\S_{t}$:
\begin{align*}
\frac{\vol(\S_{t})}{\vol(\S_{t-1})} 
\ge 
\frac{\abs{d_i - {\tilde c^i}^\transp {\tilde v^i}'}}{d_i - {\tilde c^i}^\transp \tilde v^i}
=
\frac{\abs{d_i - {c^i}^\transp {v^i}'}}{d_i - {\tilde c^i}^\transp \tilde v^i}
\ge \frac 32.
\end{align*}
Hence,
\begin{align*}
\vol(\S_{t}) \ge 3/2 \vol(\S_{t-1}) \ge (3/2)^t \vol(\S_{0}).
\end{align*}

Fact 3: 
There exists $\Delta$, whose size is polynomial in $\size(H,h,W,w,\eta)$, such that, for every positive integer $t$, we have $\vol(\S_t) \le \Delta$.
We now show this fact.
For every $i=1,\dots,p$, consider the optimization problem
\begin{align*}
\min & \quad x_i \\
\st & \quad Wx \le w \\  
& \quad x^\transp H x + h^\transp x \le \eta.
\end{align*}
By scaling, we can assume without loss of generality that the data in this optimization problem is integer, and we denote by $s$ the size of the obtained problem.
It follows from theorem~2 in \cite{Kha83} that this problem has an optimal solution $x^*$ that satisfies
\begin{align*}
\norm{x^*} 
\le (2n 2^s)^{2^8 n}
\le (2s 2^s)^{2^8 s}
\le (2^{2s})^{2^8 s}
= 2^{2^9s^2}.
\end{align*}
Therefore, the set $\proj_p(\sQ)$ satisfies the inequalities $-2^{2^9s^2} \le y_i \le 2^{2^9s^2}$ for $i=1,2,\dots,p$.
The fact then follows by observing that $\S_t \subseteq \proj_p(\sQ)$ for every positive integer $t$.

From Facts 1 to 3 we obtain, for each positive integer $t$,
\begin{align*}
\frac{\Delta}{\vol(\S_{0})} 
\ge 
\frac{\vol(\S_{t})}{\vol(\S_{0})} 
\ge
\pare{\frac 32}^t,
\end{align*}
thus the procedure terminates before iteration $t^* = \ceil{\log_{3/2}(\Delta/\vol(\S_{0}))}$, which is polynomially bounded.
From \eqref{eq enlarge}, at termination
we reach a situation in which,
for all $x \in \sQ$ and all $i = 0,1, \ldots, p$, we have  
\begin{align*}
\abs{d_i - {c^i}^\transp x} \le \frac{3}{2} (d_i - {\tilde c^i}^\transp \tilde v^i).
\end{align*}
Therefore, for all $y \in \proj_p(\sQ)$ and all $i = 0,1, \ldots, p$, we have  
\begin{align*}
\abs{d_i - {\tilde c^i}^\transp y} \le \frac{3}{2} (d_i - {\tilde c^i}^\transp \tilde v^i).
\end{align*}
\end{cpf}

Let $\tilde v^0, \tilde v^1, \ldots, \tilde v^p$ be the vectors in $\proj_p(\sQ)$ found by \cref{claim large simplex}.
We define the map $\tilde \tau : \R^p \to \R^p$ as 
\begin{align*}
\tilde \tau(y) := \tilde M^{-1} (y - \tilde v^0),
\end{align*} 
where $\tilde M \in \Q^{p \times p}$ is the invertible matrix with columns $\tilde v^1-\tilde v^0,\tilde v^2-\tilde v^0,\dots,\tilde v^p-\tilde v^0$.
Define
\begin{align*}
& \tilde a:=\frac{\tilde e}{p+\ceil{\sqrt{p}}} \in \Q^p \\
& r:=\frac{1}{p+\ceil{\sqrt{p}}} \in \Q \\
& R:= 2 \ceil{\sqrt{p}} \in \Q,
\end{align*}
where $\tilde e$ denotes the vector in $\R^p$ with all entries equal to one.
Note that
\begin{align*}
\frac Rr = 2 \ceil{\sqrt{p}} (p+\ceil{\sqrt{p}}) \le 2 \ceil{\sqrt{p}}^3 + 2 \ceil{\sqrt{p}}^2 \le 4 \ceil{\sqrt{p}}^3.
\end{align*}

\begin{claim}
\label{eq claim 5}
We have
\begin{align*}
\B^p(\tilde a, r) \subseteq \tilde \tau (\proj_p(\sQ)) \subseteq \B^p(\tilde a,R).
\end{align*}
\end{claim}

\begin{cpf}
Note that $\tilde \tau(\tilde v^0) = 0$ and that $\tilde \tau(\tilde v^i) = \tilde e^i$, for $i=1,2,\dots,p$,
where we denote by $\tilde e^1,\tilde e^2,\dots,\tilde e^p$ the standard basis of $\R^p$.

We now show $\B^p(\tilde a, r) \subseteq \tilde \tau (\proj_p(\sQ)).$
To prove it, we denote by $\S$ the simplex in $\R^p$ defined by $\S := \conv\bra{0, \tilde e^1, \tilde e^2, \ldots, \tilde e^p}$.
An inequality description of $\S$ is given by $\S = \{z \in \R^p : z \ge 0, \ \sum_{i=1}^p z_i \le 1 \}$.
We then have $\B^p(\tilde a,r) \subseteq \S$, because
\begin{align*}
& \dist \pare{\tilde a, \ \tilde a- \frac{\tilde e^i}{p+\ceil{\sqrt{p}}}} = \frac{1}{p+\ceil{\sqrt{p}}} = r \qquad \forall i=1,\dots,p \\
& \dist\pare{\tilde a, \ \frac{\tilde e}{p}} 
= \sqrt{p} \frac{\ceil{\sqrt{p}}}{p(p+\ceil{\sqrt{p}})}
\ge \frac{1}{p+\ceil{\sqrt{p}}} = r.
\end{align*}
Since $\S \subseteq \tilde \tau(\proj_p(\sQ))$, we obtain $\B^p(\tilde a,r) \subseteq \tilde \tau(\proj_p(\sQ))$.

Next, we show $\tilde\tau (\proj_p(\sQ)) \subseteq \B^p(\tilde a, R)$.
From the conditions in \cref{claim large simplex}, we obtain 
\begin{align*}
\tilde\tau(\proj_p(\sQ)) 
& \subseteq \{z \in \R^p : -3/2 \le z \le 3/2, \ -1/2 \le \sum_{i=1}^p z_i \le 5/2 \} \\
& \subseteq \{z \in \R^p : -3/2 \le z \le 3/2 \}.
\end{align*}
We have
\begin{align*}
\dist \pare{\tilde a, -\frac32 \tilde e}
= \sqrt{p} \pare{ \frac 1 {p+\ceil{\sqrt{p}}} + \frac 32}
\le 2 \sqrt{p} \le R,
\end{align*}
thus $\tilde\tau(\proj_p(\sQ)) \subseteq \B^p(\tilde a,R)$.
\end{cpf}

Let $B := \tilde M^{-1} \in \Q^{p \times p}$, and define the map $\tau : \R^p \to \R^p$ as 
\begin{align*}
\tau(y) := By.
\end{align*}
Define
\begin{align*}
a:= \tilde a + B \tilde v_0 \in \Q^p.
\end{align*}

\begin{claim}
We have
\begin{align*}
\B^p(a, r) \subseteq \tau (\proj_p(\sQ)) \subseteq \B^p(a, R).
\end{align*}
\end{claim}

\begin{cpf}
Using the definition of $\tau$, for $y \in \R^p$, we have
\begin{align*}
\tau(y)
& = B y
=
\tilde \tau (y)  + B \tilde v_0.
\end{align*}
In particular, $\tau(\proj_p (\sQ)) = \tilde \tau (\proj_p (\sQ))  + B \tilde v_0$.
Thus, from \cref{eq claim 5}, we obtain
\begin{align*}
\B^p(a, r)
=
\B^p(\tilde a, r) + B \tilde v_0 
\subseteq 
\tau (\proj_p(\sQ)) 
\subseteq 
\B^p(\tilde a, R) + B \tilde v_0
=
\B^p(a, R).
\end{align*}
\end{cpf}
\end{proof}

We remark that it might be possible to obtain an alternative proof of \cref{prop sandwich} using the shallow-cut ellipsoid algorithm.
For example, one could try to obtain a polynomial time weak separation oracle for projections of convex quadratic sets, and employ theorem~4.6.1 in~\cite{GroLovSch88}.
It is important to note that, in order to apply theorem~4.6.1 in~\cite{GroLovSch88}, the projection of the convex quadratic set must be well-bounded.
An advantage of our ``direct'' proof of \cref{prop sandwich} is that it does not use the ellipsoid method and does not require a weak separation oracle, or a well-bounded assumption.

\subsection{The feasibility problem}
\label{sec MICQP feas prob}


In this section we show that the feasibility version of \cref{prob MICQP} is FPT with parameter $p$.
The overall structure of the algorithm is based on Lenstra's \cite{Len83} algorithm for mixed integer linear programming.
The key difference is that in each iteration we reduce ourselves to the case where the convex quadratic set is full-dimensional.
This is in contrast to Lenstra's algorithm \cite{Len83}, where the linearity of the functions allows for a simple reduction to the case where the projection of the polyhedron onto the space of integer variables is full-dimensional.

Our proof uses 
\cref{th full-dim conv quad,prop sandwich,lem Lenstra,th full dim poly}, and a 
%
%
%
flatness result due to Lenstra \cite{Len83}, which we state below and that 
follows directly from proposition~4 in~\cite{dP23bMPA}.
Given an invertible matrix $B \in \R^{p \times p}$, we define the \emph{lattice} 
\begin{align*}
\Lambda(B)
 := \bra{B \mu : \mu \in \Z^p}.
\end{align*}
Recall that the \emph{width} of a bounded closed set $\S \subseteq \R^p$ along a vector $d \in \R^p$ is
\begin{align*}
\width_d (\S) := \max \bra{ d^\transp y : y \in \S } - \min \bra{ d^\transp y : y \in \S }.
\end{align*}

\begin{lemma}[Flatness lemma]
\label{lem Lenstra}
Let $a \in \Q^p$, let $r \in \Q$ with $r \ge 0$, and let $B \in \Q^{p \times p}$ be invertible.
There is a polynomial time algorithm which either finds a vector in $\B^p(a,r) \cap \Lambda(B)$,
or finds a vector $d \in \Q^p \setminus \{0\}$ with $B^\transp d \in \Z^p$
such that $\width_d(\B^p(a,r)) \le p \constlen$.
\end{lemma}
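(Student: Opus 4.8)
The plan is to reduce the statement to a known flatness theorem for lattices applied to a Euclidean ball, transported through the linear map $B$. The key observation is that $\B^p(a,r)$ is an ellipsoid (in fact a ball) and that $x \in \B^p(a,r) \cap \Lambda(B)$ if and only if $B^{-1}x \in B^{-1}\B^p(a,r) \cap \Z^p$, where $B^{-1}\B^p(a,r)$ is the ellipsoid $\{y \in \R^p : \norm{B y - a} \le r\}$. So the task becomes: given an ellipsoid $E := B^{-1}\B^p(a,r)$, either find an integer point in $E$ or find an integer direction along which $E$ is thin. This is precisely what Lenstra's flatness argument (and its quantitative refinements) provides, and the excerpt tells us the concrete form we need follows from proposition~4 in~\cite{dP23bMPA}.

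First I would make the change of variables explicit: set $E := \{y \in \R^p : \norm{By - a} \le r\}$, note $E = B^{-1}\pare{\B^p(a,r)}$, and record that $E$ is a (possibly degenerate, if $r=0$) ellipsoid whose defining data $B, a, r$ has size polynomial in the input. Then I would invoke the cited result of~\cite{dP23bMPA} on $E$: it yields, in polynomial time, either an integer vector $\mu \in E \cap \Z^p$, or a nonzero integer vector $c \in \Z^p$ with $\width_c(E) \le p\constlen$ (this is where the constant $\constlen = 2^{p(p-1)/4}$ enters — it is the Lenstra-type flatness constant coming from LLL-reduction of the lattice). In the first case, $B\mu \in \B^p(a,r) \cap \Lambda(B)$ and we are done. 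In the second case, I would translate the width bound back: for any $y$, $c^\transp y = c^\transp B^{-1}(By) = (B^{-\transp}c)^\transp (By)$, and as $y$ ranges over $E$, $By$ ranges over $\B^p(a,r)$; hence setting $d := B^{-\transp} c$ gives $\width_d(\B^p(a,r)) = \width_c(E) \le p\constlen$, and $B^\transp d = c \in \Z^p$ as required, with $d \ne 0$ since $c \ne 0$ and $B^{-\transp}$ is invertible.

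The main obstacle I anticipate is purely bookkeeping: matching the precise normalization and constant in proposition~4 of~\cite{dP23bMPA} to the statement here (whether the cited result is phrased for balls, for general ellipsoids, or in terms of $\B^p(0,1)$ pulled back by an arbitrary invertible matrix), and checking that the degenerate case $r=0$ — where $\B^p(a,r) = \{a\}$ is a single point — is handled correctly: either $a$ happens to lie in $\Lambda(B)$, or every nonzero integer $d$ trivially satisfies $\width_d(\{a\}) = 0 \le p\constlen$, so one can just return any such $d$ (e.g. a standard basis vector, noting $B^\transp e_i \in \Z^p$ need not hold, so one must instead take $d = B^{-\transp}e_i$). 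Aside from confirming these edge cases and that all constructed quantities have polynomial size, the argument is a direct transport of a black-box flatness result through the linear isomorphism $B$.
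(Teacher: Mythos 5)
Your proposal is correct and matches the paper, which itself offers no proof of this lemma beyond the assertion that it follows directly from proposition~4 in~\cite{dP23bMPA}; your change of variables $x \mapsto B^{-1}x$ (sending $\B^p(a,r)$ to an ellipsoid and $\Lambda(B)$ to $\Z^p$), the identity $\width_{B^{-\transp}c}(\B^p(a,r)) = \width_c(B^{-1}\B^p(a,r))$, and the treatment of $r=0$ are exactly the routine transport steps implicit in that citation.
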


\begin{proposition} 
\label{prop main feasibility}
Let $\sQ$ be a convex quadratic set in $\R^n$ and
let $p \in \{0,\dots,n\}$.
There is an algorithm that either returns a vector in $\sQ \cap \pare{\Z^p \times \R^{n-p}}$, or certifies that $\sQ \cap \pare{\Z^p \times \R^{n-p}} = \emptyset$.
The algorithm is FPT with parameter $p$.
\end{proposition}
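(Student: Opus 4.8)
The plan is to mimic Lenstra's algorithm for mixed integer linear programming \cite{Len83}, but with the crucial modification that at each recursive step we first pass to a full-dimensional convex quadratic set by invoking \cref{th full-dim conv quad}. I would proceed by recursion on the number of integer variables $p$. The base case $p=0$ is immediate: the set $\sQ \cap \R^n$ is just $\sQ$ itself, a convex quadratic set, and its nonemptiness can be decided (and a point produced, when nonempty) by the Kozlov-Tarasov-Khachiyan algorithm \cite{KozTarKha81}, by minimizing the convex quadratic function subject to the linear constraints. So assume $p \ge 1$ and that the result holds for all smaller values of the integer-variable parameter.

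For the inductive step, I would first apply \cref{th full-dim conv quad} to $\sQ$ with parameter $p$. If it reports $\S := \sQ \cap (\Z^p \times \R^{n-p}) = \emptyset$, we are done. Otherwise we obtain $p' \le p$, $n'$, an affine map $\tau(x') = \bar x + Mx'$, and a \emph{full-dimensional} convex quadratic set $\sQ' \subseteq \R^{n'}$ with $\sQ = \tau(\sQ')$ and $\S = \tau(\sQ' \cap (\Z^{p'} \times \R^{n'-p'}))$. If $p' \le p-1$ we can recurse directly on $\sQ'$ with parameter $p'$. The interesting case is $p' = p$, where $\sQ'$ is full-dimensional. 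Now I would check whether $\sQ'$ is bounded; if not, a separate argument is needed (analogous to the continuous-direction handling in Lenstra's approach and in \cref{prop unbounded}), so for the plan I focus on the bounded case. Apply \cref{prop sandwich} to the bounded full-dimensional convex quadratic set $\sQ'$ with parameter $p$: this yields an invertible $B \in \Q^{p\times p}$, a center $a \in \Q^p$, and radii $r,R$ with $R/r \le 4\ceil{\sqrt p}^3$ such that $\B^p(a,r) \subseteq B(\proj_p(\sQ')) \subseteq \B^p(a,R)$. Then apply the Flatness lemma (\cref{lem Lenstra}) to $\B^p(a,R)$ and the lattice $\Lambda(B)$: either it returns a lattice point in $\B^p(a,R) \cap \Lambda(B)$ — actually we want a lattice point of $\B^p(a,r) \cap \Lambda(B)$, so I would first compare $r$ against the relevant threshold: if $r \ge p\constlen$ then $\B^p(a,r)$ certainly contains a point of $\Lambda(B)$ and more is true, so in fact the inner ball already guarantees an integer point in $\proj_p(\sQ')$ which we can lift to a feasible mixed integer point by solving a convex quadratic feasibility problem over the remaining continuous variables — or the lemma returns a direction $d \in \Q^p\setminus\{0\}$ with $B^\transp d \in \Z^p$ and $\width_d(\B^p(a,R)) \le p\constlen$. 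In the latter branch, $\width_{B^\transp d}(\proj_p(\sQ'))$ is bounded by a function of $p$ only, so the integer variables of $\sQ'$ along the direction $B^\transp d$ take only $O_p(1)$ values; for each such value we fix the corresponding integer linear form, obtaining $O_p(1)$ subproblems each with one fewer free integer variable, to which we apply the inductive hypothesis. Collecting the answers and lifting through $\tau$ (and through any intermediate affine maps) finishes the step.

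The running time is FPT in $p$ because each node of the recursion tree does polynomial-time work (invocations of \cref{th full-dim conv quad}, \cref{prop sandwich}, \cref{lem Lenstra}, and Kozlov-Tarasov-Khachiyan), the branching factor is bounded by a function of $p$ (roughly $p\constlen + 1$), and the recursion depth is at most $p$; moreover one must verify, as in \cref{th full-dim conv quad}, that the bit sizes of all numbers produced stay polynomially bounded — this is where it matters that we never compose the dimension-reduction recursively in an uncontrolled way, but rather restart from the original description at each branch where possible.

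I expect the main obstacle to be the handling of unboundedness of $\sQ'$, since \cref{prop sandwich} requires a bounded set: one cannot simply project the recession cone because the quadratic inequality interacts with the linear recession directions. I would address this by separating the recession cone of the underlying polyhedron into its lineality-and-recession part on which the quadratic form is constant versus the part on which it grows, reducing in the former case to a lower-dimensional problem (some integer variable is unconstrained or determined by a congruence) and bounding the feasible region in the latter case, along the lines of \cref{prop unbounded}. A secondary subtlety is ensuring that when \cref{prop sandwich} hands back the inner ball $\B^p(a,r)$, a genuine lattice point of $\proj_p(\sQ')$ can actually be realized as the projection of a point of $\sQ'$ — this is immediate since $\B^p(a,r) \subseteq B(\proj_p(\sQ'))$, so any lattice point there pulls back under $B^{-1}$ to an honest point of $\proj_p(\sQ')$, whose preimage in $\sQ'$ is found by one more convex quadratic feasibility call.
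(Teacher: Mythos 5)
Your architecture matches the paper's proof almost exactly: full-dimensionalize via \cref{th full-dim conv quad}, sandwich the projection onto the integer variables via \cref{prop sandwich}, apply the flatness lemma to the inner ball to either lift a lattice point or split into $O_p(1)$ hyperplane slices, and recurse with at least one fewer integer variable (using the ``furthermore'' clause of \cref{th full dim poly} on $\P \cap \H_\gamma$ to certify the drop in $p$). The lifting step and the FPT accounting are also as in the paper.

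The genuine gap is the unbounded case, which you flag as ``the main obstacle'' and leave open. Your sketched fix --- decomposing the recession cone into directions on which the quadratic form is constant versus directions on which it grows, and reducing to a lower-dimensional problem in the former case --- is not worked out and is harder than it looks in the mixed-integer setting (quotienting out a recession direction does not cleanly reduce the number of integer variables, and the ``congruence'' bookkeeping you allude to is nontrivial). The paper sidesteps all of this with a one-time preprocessing step: by Theorem~1 of Khachiyan \cite{Kha83}, if $\sQ \cap (\Z^p \times \R^{n-p})$ is nonempty then it contains a point $x^*$ with $\norm{x^*} \le 2^{2^5 s^2}$, where $s$ is the size of the (integer-scaled) data; so one may add the $2n$ box inequalities $-2^{2^5 s^2} \le x_i \le 2^{2^5 s^2}$ to $Wx \le w$ without changing the answer to the feasibility question. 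After that, $\sQ$ is bounded once and for all, every subproblem generated later inherits boundedness, and \cref{prop sandwich} applies at every node of the recursion. You should replace your recession-cone plan with this a priori norm bound. A secondary, minor imprecision: your ``compare $r$ against the threshold $p\constlen$'' heuristic is not what \cref{lem Lenstra} provides --- a small $r$ does not by itself preclude a lattice point, and a large $r$ does not by itself guarantee one without a lower bound on $\norm{d}$; the correct usage is simply to run the lemma on $\B^p(a,r)$ and take whichever of its two outputs it returns, then transfer the width bound to $\proj_p(\sQ)$ via the ratio $R/r \le 4\ceil{\sqrt p}^3$.
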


\begin{proof}
Let $\P := \{x \in \R^n : Wx \le w\}$, $\sQ = \{x \in \P : x^\transp H x + h^\transp x \le \eta \}$,  where $W \in \Q^{m \times n}$, $w \in \Q^m$, $H \in PSD^n(\Q)$, $h \in \Q^n$, and $\eta \in \Q$.
Let $\S := \sQ \cap \pare{\Z^p \times \R^{n-p}}.$

%

\noindent
\textbf{Boundedness.}
By scaling, we can assume without loss of generality that the data defining $\sQ$ is integer, and we denote by $s$ the size of the obtained system.
It follows from theorem~1 in \cite{Kha83} that, if $\sQ \cap \pare{\Z^p \times \R^{n-p}}$ is nonempty, there exists $x^* \in \sQ \cap \pare{\Z^p \times \R^{n-p}}$ that satisfies
\begin{align*}
\norm{x^*} 
\le (2n 2^s)^{2^4 n}
\le (2s 2^s)^{2^4 s}
\le (2^{2s})^{2^4 s}
= 2^{2^5s^2}.
\end{align*}
Therefore, $x^*$ satisfies the $2n$ inequalities $-2^{2^5 s^2} \le x_i \le 2^{2^5 s^2}$, for $i = 1,2,\dots,n$.
Without loss of generality, we now assume that the system $Wx \le w$ contains these $2n$ inequalities, thus $\sQ$ is bounded.

\noindent
\textbf{Full-dimensionality.} 
We apply \cref{th full-dim conv quad}.
If we find out that $\S$ is empty, we are done.
Otherwise, \cref{th full-dim conv quad} finds $p' \in \bra{0,1,\dots,p}$, $n' \in \bra{p',p'+1,\dots,p'+n-p}$, and a full-dimensional convex quadratic set $\sQ' \subseteq \R^{n'}$
such that $\sQ \cap \pare{\Z^p \times \R^{n-p}}$ is empty if and only if $\sQ' \cap (\Z^{p'} \times \R^{n'-p'})$ is empty.
Furthermore, since $\sQ$ is bounded, then so is $\sQ'$.
For ease of notation, we simply assume that $\sQ$ is full-dimensional.

\noindent
\textbf{Continuous case.}
If $p=0$, we can solve the convex quadratic programming problem
\begin{align*}
\min & \quad x^\transp H x + h^\transp x \\
\st & \quad Wx \le w 
\end{align*}
using Kozlov-Tarasov-Khachiyan algorithm \cite{KozTarKha81}.
We denote by $\bar \eta$ the minimal value and we have that $\sQ$ is empty if and only if $\eta < \bar \eta$.
Thus, in the remainder of the proof, we assume $p \ge 1$.

\noindent
\textbf{Feasibility or partition.}
We apply \cref{prop sandwich} and find 
a map $\tau : \R^p \to \R^p$ of the form $\tau(y) = By$ with $B$ invertible in $\Q^{p \times p}$,
a vector $a \in \Q^p$,
and positive numbers $r, R \in \Q$ satisfying $R / r \le 4 \ceil{\sqrt{p}}^3$ such that
\begin{align*}
\B^p(a, r) \subseteq \tau (\proj_p(\sQ)) \subseteq \B^p(a, R).
\end{align*}
From \cref{lem Lenstra}, there is an algorithm which either finds a vector $\tilde z$ in $\B^p(a,r) \cap \Lambda(B)$, 
or finds a vector ${\tilde d} \in \Q^p \setminus \{0\}$ with $B^\transp \tilde d \in \Z^p$ such that $\width_{\tilde d}(\B^p(a,r)) \le p \constlen$.

In the remainder of the proof, we consider separately two cases.
First, consider the case where \cref{lem Lenstra} found a vector $\tilde z$ in $\B^p(a,r) \cap \Lambda(B)$.
Then, denoting by $\tau^\leftarrow$ the inverse of $\tau$, the vector $\tilde y := \tau^\leftarrow(\tilde z) = B^{-1} \tilde z$ is in $\tau^\leftarrow(\B^p(a, r))$, and thus in $\proj_p(\sQ)$.
From $\tilde z \in \Lambda(B)$, we obtain $\tilde y \in \Z^p$.
Thus, $\tilde y \in \proj_p(\sQ) \cap \Z^p$.
This implies that $\sQ \cap \pare{\Z^p \times \R^{n-p}}$ is nonempty.
A vector in $\sQ \cap \pare{\Z^p \times \R^{n-p}}$ can be found 
by solving the following convex quadratic programming problem using Kozlov-Tarasov-Khachiyan algorithm:
\begin{align*}
\min & \quad x^\transp H x + h^\transp x \\
\st & \quad Wx \le w \\
& \quad x_i = \tilde y_i & i=1,2,\dots,p.
\end{align*}

Next, consider the case where \cref{lem Lenstra} found a vector ${\tilde d} \in \Q^p \setminus \{0\}$ with $B^\transp \tilde d \in \Z^p$ 
such that $\width_{\tilde d}(\B^p(a,r)) \le p \constlen$. 
Hence,
\begin{align*}
\width_{\tilde d}(\tau(\proj_p(\sQ)))
\le \width_{\tilde d}(\B^p(a,R)) 
= \frac Rr \width_{\tilde d}(\B^p(a,r)) 
\le 4 \ceil{\sqrt{p}}^3 p \constlen.
\end{align*}
Let $\tilde c := \basmatB^\transp {\tilde d} \in \Z^p \setminus \{0\}$.
Then,
\begin{align*}
\width_{\tilde c}(\proj_p(\sQ))
& = \max \bra{ {\tilde c}^\transp y : y \in \proj_p(\sQ) } - \min \bra{ {\tilde c}^\transp y : y \in \proj_p(\sQ) } \\
& = \max \bra{ {\tilde d}^\transp B y : y \in \proj_p(\sQ) } - \min \bra{ {\tilde d}^\transp B y : y \in \proj_p(\sQ) } \\
& = \max \bra{ {\tilde d}^\transp z : z \in \tau(\proj_p(\sQ)) } - \min \bra{ {\tilde d}^\transp z : z \in \tau(\proj_p(\sQ)) } \\
& = \width_{\tilde d}(\tau(\proj_p(\sQ))) \\
%
& \le 4 \ceil{\sqrt{p}}^3 p \constlen.
\end{align*}
Let 
\begin{align*}
\rho & := \min\{{\tilde c}^\transp y : y \in \tau^\leftarrow(\B^p(a,R))\} \\
& = \min\{{\tilde d}^\transp z : z \in \B^p(a,R)\} \\
& = \min\{{\tilde d}^\transp z : z \in \B^p(0,R)\} + {\tilde d}^\transp a \\
& = - R \norm{\tilde d} + {\tilde d}^\transp a.
\end{align*}
We obtain
\begin{align*}
\{{\tilde c}^\transp y : y \in \tau^\leftarrow(\B^p(a,R))\} \in [\rho, \rho + 4 \ceil{\sqrt{p}}^3 p \constlen].
\end{align*}
Since $\proj_p(\sQ) \subseteq \tau^\leftarrow(\B^p(a,R))$, we also have
\begin{align*}
\{{\tilde c}^\transp y : y \in \proj_p(\sQ)\} \in [\rho, \rho + 4 \ceil{\sqrt{p}}^3 p \constlen].
\end{align*}
Every point in $\proj_p(\sQ) \cap \Z^p$ is contained in one of the hyperplanes
\begin{align*}
\{y \in \R^p : {\tilde c}^\transp y = \gamma\}, \qquad \gamma = \ceil{\rho}, \ \ceil{\rho} + 1, \dots, \ceil{\rho + 4 \ceil{\sqrt{p}}^3 p \constlen}.
\end{align*}
We define $c \in \Z^n \setminus \{0\}$ as $c_j := \tilde c_j$ for $j=1,2,\dots,p$, and $c_j := 0$ for $j=p+1,p+2,\dots,n$.
Then, every point in $\sQ \cap \pare{\Z^p \times \R^{n-p}}$ is contained in one of the hyperplanes
\begin{align*}
\H_\gamma := \{x \in \R^n : {c}^\transp x = \gamma\}, \qquad \gamma = \ceil{\rho}, \ \ceil{\rho} + 1, \dots, \ceil{\rho + 4 \ceil{\sqrt{p}}^3 p \constlen}.
\end{align*}
For each $\gamma = \ceil{\rho}, \ceil{\rho} + 1, \dots, \ceil{\rho + 4 \ceil{\sqrt{p}}^3 p \constlen}$, 
we apply \cref{th full dim poly} to the polyhedron $\P \cap \H_\gamma$.
If we find out that $\P \cap \H_\gamma \cap \pare{\Z^p \times \R^{n-p}}$ is empty, then there is no need to consider this value $\gamma$ any further.
Otherwise, \cref{th full dim poly} finds $p' \in \bra{0,1,\dots,p-1}$, $n' \in \bra{p',p'+1,\dots,p'+n-p}$, 
a map $\tau : \R^{n'} \to \R^n$
of the form $\tau(x') = \bar x + Mx'$, with 
$\bar x \in \Z^p \times \Q^{n-p}$ and $M \in \Q^{n \times n'}$ of full rank,
and 
a full-dimensional polyhedron 
$\P'_\gamma \subseteq \R^{n'}$
such that
\begin{align*}
\P \cap \H_\gamma & = \tau \pare{\P'_\gamma} \\
\P \cap \H_\gamma \cap \pare{\Z^p \times \R^{n-p}} & = \tau \pare{\P'_\gamma \cap \pare{\Z^{p'} \times \R^{n'-p'}}}.
\end{align*}
We define 
$H' := M^\transp H M \in PSD^{n'}(\Q)$,
$h' := 2M^\transp H^\transp \bar x + M^\transp h \in \Q^{n'}$, 
$\eta' := \eta - \bar x^\transp H \bar x + h^\transp \bar x \in \Q$,
and the convex quadratic set 
\begin{align*}
\sQ' := \bra{x' \in \P'_\gamma : {x'}^\transp H' x' + {h'}^\transp x' \le \eta'}.
\end{align*} 
We then have
\begin{align*}
\sQ \cap \H_\gamma & = \tau \pare{\sQ'_\gamma} \\
\S \cap \H_\gamma = \sQ \cap \H_\gamma \cap \pare{\Z^p \times \R^{n-p}} & = \tau \pare{\sQ'_\gamma \cap \pare{\Z^{p'} \times \R^{n'-p'}}}.
\end{align*}

We can then solve the feasibility problem over $\sQ \cap \H_\gamma \cap \pare{\Z^p \times \R^{n-p}}$ by solving, instead, the feasibility problem over $\sQ'_\gamma \cap (\Z^{p'} \times \R^{n'-p'})$.
As a result, we can solve the feasibility problem over $\sQ \cap \pare{\Z^p \times \R^{n-p}}$ by solving, instead, all the feasibility problems over $\sQ'_\gamma \cap (\Z^{p'} \times \R^{n'-p'})$, for $\gamma = \ceil{\rho}, \ceil{\rho}+1, \dots, \ceil{\rho + 4 \ceil{\sqrt{p}}^3 p \constlen}$.
This concludes one iteration of the algorithm.

\noindent
\textbf{Recursion.}
We apply recursively the iteration of the algorithm described so far in the proof.
Note that, since $\sQ$ is bounded, then so is each $\sQ_\gamma$, thus Step~1 needs to be performed only in the very first iteration.
If, in some iteration, a feasible solution of a subproblem is found, we can find a vector in $\sQ \cap \pare{\Z^p \times \R^{n-p}}$ by inverting all the maps $\tau$ used, in the previous iterations, to obtain the subproblem. 
Note that the inverse of $\tau : \R^{n'} \to \H_\gamma$ given by $\tau(x') = \bar x + Mx'$ is $\tau^\leftarrow(x) = (M^\transp M)^{-1} M^\transp (x-\bar x)$, since $M$ has full column rank.

Since each time the number of integer variables decreases at least by one,
the total number of iterations is upper bounded by
\begin{align*}
O\pare{\pare{4 \ceil{\sqrt{p}}^3 p \constlen}^p} = O\pare{p^{3p/2} p^p 2^{p^2(p-1)/4}}.
\end{align*}
\end{proof}

\subsection{Boundedness of \cref{prob MICQP}}
\label{sec MICQP bounded}

In this section we characterize when \cref{prob MICQP} is bounded. 
This characterization allows us to obtain an FPT algorithm to check boundedness of the problem. This algorithm only solves one mixed integer linear feasibility problem and one linear feasibility problem.
We remark that the convexity of the objective function is essential to obtain this result.
In fact, determining whether a nonconvex quadratic programming problem is bounded 
is NP-hard \cite{MurKab87}, even if the rank of the quadratic matrix $H$ is three \cite{dP23bMPA}.
In what follows, for a polyhedron $\P = \{x \in \R^n : Wx \le w\}$, we denote by $\rec(\P)$ its recession cone
\begin{align*}
\rec(\P) := \{r \in \R^n : x + r \in \P \ \forall x \in \P \} = \{r \in \R^n : Wx \le 0\}.
\end{align*}

\begin{proposition}
\label{prop unbounded}
Consider \cref{prob MICQP} and let $\P := \{x \in \R^n : Wx \le w\}$.
\cref{prob MICQP} is unbounded if and only if the two sets $\P \cap \pare{\Z^p \times \R^{n-p}}$ and $\{ r \in \R^n : Wr \le 0, \ Hr = 0, \ h^\transp r \le -1 \}$ are both nonempty.
Furthermore, there is an algorithm that detects whether \cref{prob MICQP} is bounded or unbounded, which is FPT with parameter $p$.
If it is unbounded, it finds $\bar x \in \P \cap \pare{\Z^p \times \R^{n-p}}$ and $\bar r \in \rec(\P)$ such that the objective function goes to minus infinity on the half-line $\{\bar x + \lambda \bar r : \lambda \ge 0\}$.
\end{proposition}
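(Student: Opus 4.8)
The plan is to establish the characterization of unboundedness first, then derive the algorithm from it. For the characterization, I would argue both directions. For the ``if'' direction, suppose $\bar x \in \P \cap (\Z^p \times \R^{n-p})$ and $\bar r$ satisfies $W\bar r \le 0$, $H\bar r = 0$, $h^\transp \bar r \le -1$. Then for every $\lambda \ge 0$, the point $\bar x + \lambda \bar r$ lies in $\P$ (since $W\bar x \le w$ and $W\bar r \le 0$) and in $\Z^p \times \R^{n-p}$ (since $\bar r_i$ integrality is not needed---wait, we need $\bar r \in \Z^p \times \R^{n-p}$). Actually I should be careful here: the recession direction $\bar r$ need not have integer first $p$ coordinates, but $\lambda \bar r$ for a single suitable $\lambda$ or an integer multiple would. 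The cleaner route: the feasible region $F := \P \cap (\Z^p \times \R^{n-p})$ is a mixed integer set; its ``recession cone'' in the relevant sense is $\rec(\P) \cap (\R^p \times \R^{n-p})$ but integrality of the continuous part is automatic and the integer part of a recession direction can be taken rational, hence after scaling by a positive integer $N$ we get $N\bar r \in \Z^p \times \R^{n-p}$, and since $H(N\bar r) = 0$ and $h^\transp(N\bar r) \le -N \le -1$, replacing $\bar r$ by $N\bar r$ we may assume $\bar r \in \Z^p \times \R^{n-p}$. Then evaluating the objective at $\bar x + \lambda \bar r$ gives $q(\bar x) + \lambda(2\bar x^\transp H \bar r + h^\transp \bar r) + \lambda^2 \bar r^\transp H \bar r = q(\bar x) + \lambda h^\transp \bar r$, which tends to $-\infty$. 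This shows the problem is unbounded and simultaneously exhibits the desired half-line.

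For the ``only if'' direction, suppose \cref{prob MICQP} is unbounded. Then $F$ is nonempty, so the first set is nonempty. It remains to produce $\bar r$ with $W\bar r \le 0$, $Hr = 0$, $h^\transp \bar r \le -1$. Fix any $\hat x \in F$. Unboundedness means the infimum of $q$ over $F$ is $-\infty$, so there is a sequence $x^k \in F$ with $q(x^k) \to -\infty$. Since $q$ is convex and bounded below on any bounded set, the points $x^k$ must escape to infinity. Write $x^k = \hat x + t_k d_k$ with $\|d_k\| = 1$, $t_k \to \infty$; by compactness of the unit sphere pass to a subsequence with $d_k \to d$. Since each $x^k \in \P$, one shows $d \in \rec(\P)$, i.e. $Wd \le 0$. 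Now along the ray direction $d$, convexity of $q$ forces $q$ restricted to the ray to be either eventually affine-decreasing or to grow; since $q(\hat x + t d)$ is a one-variable quadratic $d^\transp H d\, t^2 + (2\hat x^\transp H d + h^\transp d) t + q(\hat x)$ and $q \to -\infty$ along (approximately) this direction, we must have $d^\transp H d = 0$ and $2\hat x^\transp H d + h^\transp d < 0$. Since $H \succeq 0$, $d^\transp H d = 0$ forces $Hd = 0$ (a standard fact for PSD matrices), and then the linear term reduces to $h^\transp d < 0$. Scaling $d$ by a positive constant so that $h^\transp d \le -1$ gives $\bar r$. The one subtlety is that $d$ may be irrational; but the system $\{Wr \le 0, Hr = 0, h^\transp r \le -1\}$ is a rational polyhedron that we have just shown to be nonempty, hence it contains a rational point, and moreover clearing denominators we may even take $\bar r \in \Z^p \times \R^{n-p}$, matching the requirement for the final half-line claim (combined with $\bar x = \hat x$).

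For the algorithmic part, the characterization reduces boundedness detection to two feasibility questions: (i) is $\P \cap (\Z^p \times \R^{n-p})$ nonempty? and (ii) is $\{r \in \R^n : Wr \le 0, Hr = 0, h^\transp r \le -1\}$ nonempty? Question (i) is a mixed integer linear feasibility problem, which by \cref{prop main feasibility} (applied with the trivial quadratic constraint, e.g. $H = 0$, $h = 0$, $\eta = 0$, or directly as a polyhedron) can be solved by an FPT algorithm in the parameter $p$; it also returns a point $\bar x$ when nonempty. Question (ii) is an ordinary rational linear feasibility problem solvable in polynomial time (e.g. by Khachiyan's algorithm \cite{Kha79}), and linear programming also returns a rational solution $\bar r$, which we scale to an integer vector if we wish. \cref{prob MICQP} is unbounded precisely when both are nonempty; in that case we output $\bar x$ and $\bar r$, and the half-line $\{\bar x + \lambda \bar r : \lambda \ge 0\}$ works by the computation in the ``if'' direction above. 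If either is infeasible, the problem is bounded (provided it is feasible, i.e. (i) holds). The overall procedure solves exactly one mixed integer linear feasibility problem and one linear feasibility problem, as claimed, and is FPT with parameter $p$ since that is the only non-polynomial ingredient.

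I expect the main obstacle to be the ``only if'' direction of the characterization: carefully extracting, from an unbounded sequence, a single recession direction $d$ with simultaneously $Wd \le 0$, $d^\transp H d = 0$, and $h^\transp d < 0$, and justifying the passage from the limiting (possibly irrational) direction to a rational---indeed mixed-integer---certificate. The key technical facts I would invoke are: (a) for $H \succeq 0$, $d^\transp H d = 0 \iff Hd = 0$; (b) a convex quadratic that tends to $-\infty$ along a ray must have zero leading coefficient and negative linear coefficient along that ray; and (c) a nonempty rational polyhedron contains rational points, and a rational recession direction can be scaled to lie in $\Z^p \times \R^{n-p}$. Everything else is routine, and the algorithmic consequences follow immediately from \cref{prop main feasibility} and polynomial-time linear programming.
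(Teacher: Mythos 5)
Your ``if'' direction and the algorithmic part are fine and match the paper: two feasibility tests (one mixed integer linear, one linear), with the returned points serving as the certificate, and the scaling argument to get a mixed-integer recession direction when exhibiting feasible points of arbitrarily small objective value. The problem is the ``only if'' direction, which is exactly the step you flag as the main obstacle, and your proposed resolution does not work. From a minimizing sequence $x^k = \hat x + t_k d_k$ with $q(x^k) \to -\infty$ and $d_k \to d$, you can conclude $d \in \rec(\P)$, $d^\transp H d = 0$ (hence $Hd = 0$), and $h^\transp d \le 0$ --- but \emph{not} $h^\transp d < 0$. Concretely, take $p=0$, $\P = \R^3$, $q(x) = x_1^2 - x_2$, and the minimizing sequence $x^k = (0,k,k^2)$: here $q(x^k) = -k \to -\infty$, yet the normalized directions converge to $d = (0,0,1)$, for which $Hd = 0$ and $h^\transp d = 0$. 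The descent is carried by a component of $x^k$ that is asymptotically negligible relative to $\lVert x^k\rVert$, so the limiting direction of an arbitrary minimizing sequence need not certify unboundedness, and the statement ``$q \to -\infty$ along (approximately) this direction'' is where the argument breaks. A correct from-scratch proof needs a more careful choice of direction (e.g.\ recession-function arguments for convex functions over polyhedra in the continuous case), and in the mixed integer case one must additionally rule out that unboundedness arises only from the integer part drifting (the infima over the fibers $\{x : x_i = z_i,\ i\le p\}$ tending to $-\infty$ without any single fiber being unbounded).

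The paper sidesteps all of this by invoking theorem~4 of \cite{dPDeyMol17MPA}, which asserts precisely that \cref{prob MICQP} is unbounded if and only if there exist $\bar x \in \P \cap (\Z^p \times \R^{n-p})$ and $\bar r \in \rec(\P)$ with $q$ tending to $-\infty$ on the half-line $\{\bar x + \lambda \bar r : \lambda \ge 0\}$; the remaining work is then just the elementary computation you also perform (expand $q(\bar x + \lambda \bar r)$, use $H \succeq 0$ to get $\bar r^\transp H \bar r = 0 \Leftrightarrow H\bar r = 0$, and replace $h^\transp r < 0$ by $h^\transp r \le -1$ via conic scaling). So either cite that half-line characterization or supply a genuinely complete proof of it; as written, your argument does not establish the forward implication.
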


\begin{proof}
We start by proving the ``if and only if'' in the statement.
%
From theorem 4 in~\cite{dPDeyMol17MPA}, \cref{prob MICQP} is unbounded if and only if 
there exist $\bar x \in \P \cap \pare{\Z^p \times \R^{n-p}}$ and $\bar r \in \rec(\P)$ such that the objective function $q(x) := x^\transp H x + h^\transp x$ goes to minus infinity on the half-line $\{\bar x + \lambda \bar r : \lambda \ge 0\}$.
Evaluating $q(x)$ on the half-line, we obtain
\begin{align*}
q(\bar x + \lambda \bar r) & 
= (\bar x + \lambda \bar r)^\transp H (\bar x + \lambda \bar r) + h^\transp (\bar x + \lambda \bar r) \\
& = \lambda^2(\bar r^\transp H \bar r) + \lambda (2 \bar x^\transp H \bar r + h^\transp \bar r) + (\bar x^\transp H \bar x + h^\transp \bar x).
\end{align*}
We observe that $q(x)$ goes to minus infinity on the half-line if and only if either $\bar r^\transp H \bar r < 0$, or $\bar r^\transp H \bar r = 0$ and $2 \bar x^\transp H \bar r + h^\transp \bar r < 0$.
Since $H$ is positive semidefinite, $\bar r^\transp H \bar r \ge 0$ and $\bar r^\transp H \bar r = 0$ if and only if $H \bar r = 0$.
Hence $q(x)$ is unbounded on the above half-line if and only if $H \bar r = 0$ and $h^\transp \bar r < 0$.
So far we have shown that \cref{prob MICQP} is unbounded if and only if the two sets $\P \cap \pare{\Z^p \times \R^{n-p}}$ and $\{ r \in \R^n : Wr \le 0, \ Hr = 0, \ h^\transp r < 0\}$ are both nonempty.
Note that the set $\{ r \in \R^n : Wr \le 0, \ Hr = 0\}$ is a cone, thus the second set is nonempty if and only if the set $\{ r \in \R^n : Wr \le 0, \ Hr = 0, \ h^\transp r \le -1\}$ is nonempty.
This completes the proof of the ``if and only if'' in the statement.

We solve the mixed integer linear feasibility problem over 
\begin{align}
\label{eq unb one}
\{x \in \Z^p \times \R^{n-p} : Wx \le w\}
\end{align}
with Lenstra's algorithm \cite{Len83} (or with the algorithm in \cref{prop main feasibility}) and the linear feasibility problem over
\begin{align}
\label{eq unb two}
\{ r \in \R^n : Wr \le 0, \ Hr = 0, \ h^\transp r \le -1\}
\end{align}
with Khachiyan's algorithm \cite{Kha79}.
If at least one of the two problems \eqref{eq unb one}, \eqref{eq unb two} is infeasible, then \cref{prob MICQP} is bounded.
Assume now that both problems \eqref{eq unb one} and \eqref{eq unb two} are feasible.
In this case \cref{prob MICQP} is unbounded.
Furthermore, the vector $\bar x$ in the statement is a vector in \eqref{eq unb one} that we found, and the vector $\bar r$ in the statement is a vector in \eqref{eq unb two} that we found.
%
\end{proof}




%


\subsection{The optimization problem}
\label{sec MICQP opt prob}

We are now ready to prove \cref{th main opt} with the classic technique of binary search.
%
%
%
%
%





\begin{prfc}[of \cref{th main opt}]
Let $\P := \{x \in \R^n : Wx \le w\}$.

\noindent
\textbf{Feasibility.} 
We use Lenstra's algorithm \cite{Len83} (or \cref{prop main feasibility}) to check whether the feasible region $\P \cap \pare{\Z^p \times \R^{n-p}}$ is empty or nonempty.
If it is empty we are done, thus we now assume that it is nonempty.

\noindent
\textbf{Boundedness.} 
We use \cref{prop unbounded} to detect whether \cref{prob MICQP} is bounded or unbounded.
If it is unbounded we are done, thus we now assume that it is bounded.

\noindent
\textbf{Binary search.}
By scaling, we can assume without loss of generality that the data in \cref{prob MICQP} is integer, and we denote by $s$ its size.
It follows from theorem~2 in \cite{Kha83} that \cref{prob MICQP} has an optimal solution $x^*$ that satisfies
\begin{align*}
\norm{x^*} 
\le (2n 2^s)^{2^6 n}
\le (2s 2^s)^{2^6 s}
\le (2^{2s})^{2^6 s}
= 2^{2^7s^2}.
\end{align*}
Therefore, $x^*$ satisfies the $2n$ inequalities $-2^{2^7 s^2} \le x_i \le 2^{2^7 s^2}$, for $i = 1,2,\dots,n$.
Without loss of generality, we now assume that the system $Wx \le w$ contains these $2n$ inequalities.
%
Thus, the absolute value of the objective value of each vector $x \in \P$ can be bounded as follows:
%
%
\begin{align*}
\absL{\sum_{i,j=1}^n H_{ij} x_i x_j + \sum_{i=1}^n h_i x_i}
& \le n^2 2^{s} (2^{2^7 s^2})^2 + n 2^{s} 2^{2^7 s^2}
\le s^2 2^{s+2^8 s^2} + s 2^{s +2^7 s^2} \\
& \le 2 s^2 2^{s+2^8 s^2}
\le 2^{2^9 s^2}.
\end{align*}

Next, we bound the absolute value of the denominator of the minimal value of \cref{prob MICQP}.
Denote again by $x^*$ an optimal solution to \cref{prob MICQP}.
Consider now the equivalent convex quadratic programming problem, with $n$ variables and integer data, obtained from \cref{prob MICQP} by adding the $p$ equality constraints $x_i = x^*_i$ for $i=1,\dots,p$.
The size of this continuous problem is at most 
\begin{align*}
s' : = s + 2np + p (2^7 s^2) \le 2^7 s^3.
\end{align*}
From assertion~3 in~\cite{KozTarKha81}, the minimal value of this continuous problem can be written as the ratio of two integer numbers, where the absolute value of the denominator is at most 
\begin{align*}
2^{4 s'} = 2^{2^9 s^3}.
\end{align*}
The same conclusion holds for the minimal value of \cref{prob MICQP}, since the two problems are equivalent.

We combine the above two bounds and obtain that, to solve \cref{prob MICQP}, it suffices to solve
\begin{align*}
\log(2^{2^9 s^2+1+2^9 s^3})
\le \log(2^{2^{10} s^3})
= 2^{10} s^3
\end{align*}
mixed integer feasibility problems on a convex quadratic set of the form 
\begin{align*}
\sQ_\eta = \{x \in \R^n : Wx \le w, \ x^\transp H x + h^\transp x \le \eta \}.
\end{align*}
Each such feasibility problem can be solved using \cref{prop main feasibility}.
\end{prfc}


We believe that the interest of \cref{th main opt} lies not only in its statement, but also in its proof.
In particular, the proof shows how the results obtained in \cref{sec linear,sec cqs} can be used 
to revive Lenstra's original approach for ellipsoid rounding.
As a result, we are able to design an FPT algorithms for MICQP that does not use the ellipsoid method as a subroutine.

If we focus only on the statement of \cref{th main opt},
a natural question is whether 
it is possible to obtain an alternative proof using oracle-based techniques from integer convex optimization in fixed dimension.
%
In an attempt to apply these techniques, we can first reduce ourselves to the case where $\P := \bra{x \in \R^n : Wx \le w}$ is bounded, which can be done as discussed in the first part of the proof of \cref{th main opt}, using our \cref{prop unbounded} and theorem~2 in \cite{Kha83}.
Next, we cast \cref{prob MICQP} as a pure integer optimization problem with feasible region $\proj_p (\P) \cap \Z^p$ and objective function $f : \proj_p (\P) \to \Q$ defined by
\begin{align*}
f(x_1,\dots,x_p) := \min_{x_{p+1},\dots,x_n} \bra{x^\transp H x + h^\transp x : Wx \le w }.
\end{align*}
Note that the function $f$ is convex, and 
we can obtain a polynomial time evaluation oracle for $f$ using the Kozlov-Tarasov-Khachiyan algorithm.
We observe that we can also define an extension $f':\R^p \to \R$ of $f$ by setting $f'(x_1,\dots,x_p)$ equal to some large number $M$, in case $(x_1,\dots,x_p) \notin \proj_p (\P)$.
The obtained function $f'$ is quasiconvex, but it is not convex, conic, or discrete convic.


At this point, it seems possible to obtain an alternative proof of \cref{th main opt} that does not use \cref{prop main feasibility,prop sandwich}, by applying oracle-based techniques.
However, we are not aware of a theorem in the literature that directly implies \cref{th main opt} without the need for some additional work.
In fact, 
the known results in the literature present at least one of the following two drawbacks:
\begin{itemize}
\item[(i)] they require the construction of an extension $f'$ of $f$ to $\R^p$ or to a ball in $\R^p$, such that $f'$ is convex (theorem~1 in~\cite{OerWagWei14}), conic (theorem~14 in ~\cite{ChiGriMalParVesZol19}) or discrete convic (theorem~1 in~\cite{VesGriZolChi20});
\item[(ii)] they show that the running time is FPT only in expectation, as opposed to worst-case (theorem~7.5.1 in~\cite{Dadush12}, theorem~10 in \cite{GriMal19}), or polynomial time in fixed dimension, which is weaker than FPT (theorem~1 in~\cite{OerWagWei14}).
\end{itemize}
Furthermore, most of these results require a subgradient oracle for $f$ (theorem~7.5.1 in~\cite{Dadush12}, theorem~10 in \cite{GriMal19}) or for an extension of $f$ (theorem~1 in~\cite{OerWagWei14}).
We think it is an interesting open question to understand whether it is possible to prove a general result in integer convex optimization that directly implies \cref{th main opt}.





\ifthenelse {\boolean{SIOPT}}
{
\bibliographystyle{siamplain}
}
{
\bibliographystyle{plain}
}


\end{document}